\documentclass{article}

\usepackage{amsmath}
\usepackage{amsfonts}
\usepackage{amssymb}
\usepackage{amsthm,color}
\usepackage{amsmath,amsthm,amssymb}
\usepackage{graphicx,mathtools}
\usepackage{cite}
\usepackage{enumitem}
\usepackage{lipsum}
\usepackage{algorithm}      
\usepackage{algorithmic}    
\usepackage{dsfont}
\usepackage{hyperref}

\newtheorem{Theorem}{Theorem}[section]

\def\R{{\mathbb R}}

\def\({\left(}
\def\){\right)}

\usepackage{subcaption} 

\graphicspath{{pics/}}

\begin{document}


\begin{flushleft}
\Large 
\noindent{\bf \Large Novel implementation of the extended sampling method for inverse biharmonic scattering}
\end{flushleft}

\vspace{0.2in}

{\bf  \large Isaac Harris }\\
\indent {\small Department of Mathematics, Purdue University, West Lafayette, IN 47907, USA, }\\
\indent {\small Email: \texttt{harri814@purdue.edu} }\\

{\bf  \large General Ozochiawaeze}\\
\indent {\small Department of Mathematics, Purdue University, West Lafayette, IN 47907, USA, } \\
\indent {\small Email: \texttt{oozochia@purdue.edu}}\\

{\bf AMS subject classifications:} {35R30, 35R60}\\

{\bf Keywords:} {inverse flexural wave scattering, biharmonic wave equation, extended sampling method, clamped obstacle, single incident wave}

\begin{abstract}
\noindent This paper considers an inverse shape problem for recovering an unknown clamped obstacle in two dimensions from far--field measurements generated by a single incident wave or just a few incident waves for the biharmonic (flexural) wave equation. Here we will develop a new extended sampling method (ESM) that is derived using the analysis of the well--known factorization method. We will also consider an ESM using both sound--soft and sound--hard sampling disks to identify sampling points where the reference disk intersects the unknown cavity. The use of a  sound--hard sampling disk has not been studied in the literature whereas the  sound--soft sampling disk has been used in most recent works. Traditionally the ESM seeks to find the location of the scatterer from limited incident directional data. Here, our method acts more like the factorization method to obtain the location as well as the size (and possibly the shape) of the obstacle. We present numerical experiments with synthetic data that demonstrate how effective this new implementation is with respect to noisy data and illustrate the influence of the reference disk radius on the reconstruction.
\end{abstract}

\section{Introduction}
In this paper, we consider an inverse shape problem for biharmonic (flexural) waves in a thin elastic plate. Here, we wish to  determine an unknown clamped obstacle $D\subset\R^2$ from measured far--field data generated by one or a few incident plane waves. Inverse flexural wave scattering problems arise in a wide range of applications including non--destructive testing, structural health monitoring, and engineering platonic crystals \cite{MEMMOLO2018568,gao2018theoretical,sozio2023optimal}. Our scattering problem is governed by the Kirchhoff--Love model for a clamped region. The out-of-plane displacement field satisfies a biharmonic Helmholtz equation along with clamped boundary conditions which leads to a fourth--order scattering problem. For this problem, the well--posedness for the direct scattering problem has been studied in \cite{bourgeois2020well} as well as \cite{DongHeping2024ANBI} where a numerical method was also given.

A large class of non--iterative reconstruction techniques for inverse scattering are referred to as {\it qualitative methods}, such as the linear sampling and the factorization methods which have been studied for a wide range of problems. These approaches aim to recover the support of the scatterer by testing whether certain functions belong to the range of a data operator, rather than solving a nonlinear optimization problem. They are typically inexpensive to implement and require little a priori information. One of the main disadvantages is that these methods often require multi--static data collected all around the scatterer. In the context of biharmonic (flexural) wave scattering, qualitative methods have only been developed recently, including linear sampling, direct sampling and  factorization methods see, for e.g., \cite{RafaEtal-simplesupport,bourgeois2020linear,GuoLongWu2024,HK-simplesupport,HLP-directsample,HarrisLiOzochiawaeze2026}.

The focus of our work in this manuscript is the {extended sampling method} (ESM) introduced for inverse acoustic scattering in \cite{LiuSun2018} and later extended to other wave models such as inverse electromagnetic scattering in  \cite{LiuYangYanSun2026}. The ESM is designed to recover the location of an unknown target using far--field measurements generated by one or a small number of incident directions. Conceptually, the method probes the data with a family of {translated sampling domains} (typically disks) and produces an imaging functional whose behavior changes depending on whether the sampling domain intersects the unknown scatterer. This alleviates one of the main drawbacks of many qualitative methods, namely the requirement of full--aperture multi--static measurements.

While the original ESM analysis is closely tied to linear sampling type arguments, the factorization method provides a more analytically rigorous framework based on range characterizations of compact normal operators \cite{Kirsch1998,kirsch2008factorization}. Motivated by this, in this paper we develop a {\it factorization--based} implementation of the ESM for the inverse biharmonic scattering problem with a clamped obstacle. Our approach yields explicit indicators derived from the spectral data of far--field operators associated with translated sampling disks and the measured far--field data. In particular, we construct imaging functionals that can be evaluated given the biharmonic far--field data that can be used to recover the obstacle $D$ by sweeping through sampling points $z \in \R^2$.

A notable feature of our formulation is that we consider two types of sampling disks with {sound--soft} (Dirichlet) and {sound--hard} (Neumann) boundary conditions. The sound--soft variant is the standard choice in most ESM implementations, whereas the sound--hard sampling disk has not been explored in the ESM literature to the best of our knowledge. The resulting pair of indicators provides additional flexibility in practice and offers a second, closely related reconstruction functional derived from the same factorization--based perspective.

In addition to localization, our numerical experiments suggest that the dependence of the indicator on the sampling radius contains information about the size of the obstacle, enabling a simple disk--type approximation that can be used as a coarse geometric proxy for $D$. Such coarse reconstructions can be valuable as an initialization for more refined iterative methods, and can also be incorporated into stochastic inversion frameworks for limited data settings (see, for e.g., \cite{LiDengSun2020,LiSunXu2020,HuangLiXu2026}).

The remainder of the paper is organized as follows. In Section \ref{dp-FMbasedESM} we recall the direct scattering problem for a clamped obstacle and review the operator splitting that reduces the biharmonic model to coupled Helmholtz and modified Helmholtz components. In Section \ref{ip-FMbasedESM} we formulate the inverse problem and derive factorization--based ESM imaging functionals using translated sampling disks, including both Dirichlet and Neumann sampling operators. Section \ref{numerics-FMbasedESM} presents the numerical implementation and reconstruction results for one and multiple incident directions, including tests under noise and examples illustrating the effect of the sampling radius. We conclude in Section \ref{end-FMbasedESM} with a summary and possible extensions.

\section{Direct Scattering Problem}\label{dp-FMbasedESM}
In this section, we will discuss the direct scattering problem associated with our inverse shape problem that we wish to solve using our new ESM. The direct scattering problem comes from  biharmonic (flexural) wave scattering with application to many areas such as non--destructive testing as well as other areas of scientific interest. To this end, we consider a two--dimensional infinite elastic thin plate with an impenetrable clamped obstacle, which is a bounded open domain $D \subset \R^2$ with smooth boundary $\Gamma$. The obstacle is illuminated by a time--harmonic plane wave expressed as
\[
u^i(x,d)=\mathrm e^{\mathrm i\kappa x\cdot d},\quad x\in \R^2,
\]
where $d=(\cos{\theta},\sin{\theta})$ represents the direction of the incident wave for a fixed $\theta\in [0,2\pi)$ which denotes the incident angle, and $\kappa>0$ is the wavenumber. 
The vertical out-of-plane displacement $u$, also known as the total field, satisfies the two--dimensional biharmonic wave equation in the frequency domain
\begin{align}\label{eqn:pde}
    \Delta^2u-\kappa^4u=0,\quad x\in\R^2\setminus\overline D.
\end{align}
On the boundary $\Gamma$, we assume that the total field satisfies the corresponding clamped boundary conditions
\begin{align}\label{eqn:bcs}
    u=0,\quad \partial_\nu u=0,\quad x\in \Gamma,
\end{align}
where $\nu$ is the unit outward normal vector on the boundary $\Gamma$.

Similar to the case of acoustic or electromagnetic scattering, we have that the total field can be expressed as the sum of the incident wave and scattered field
\[
u=u^i+u^s.
\]
Since we have that the incident plane wave satisfies the biharmonic wave equation in the frequency domain by \eqref{eqn:pde} we have that the scattered field $u^s\in H_{\text{loc}}^2(\mathbb R^2\setminus\overline D)$ also satisfies the biharmonic wave equation 
\begin{align}\label{eqn:scat1}
    \Delta^2u^s-\kappa^4u^s=0,\quad x\in\mathbb R^2\setminus\overline D,
\end{align}
and the clamped boundary conditions \eqref{eqn:bcs} become non--homogeneous boundary conditions on $\Gamma$ for the scattered field such that 
\begin{align}\label{eqn:scat2}
    u^s=-u^i,\quad \partial_\nu u^s=-\partial_\nu u^i,\quad x\in\Gamma.
\end{align}
In addition, to close the system we assume that the scattered field and its Laplacian satisfy the Sommerfeld radiation condition
\begin{align}\label{eqn:src}
    \lim_{r\to\infty} \sqrt{r}(\partial_r u^s-\mathrm{i}\kappa u^s)=0,\quad \lim_{r\to\infty}\sqrt{r}(\partial_r \Delta u^s-\mathrm{i} \kappa \Delta u^s)=0,\quad r=|x|.
\end{align}
From \cite{bourgeois2020well,DongHeping2024ANBI} we have that the scattering problem \eqref{eqn:scat1}--\eqref{eqn:src} is well--posed. The radiation conditions say that the scattered field is outgoing. Following \cite{bourgeois2020well}, we define the auxiliary functions
\begin{align*} 
    u^s_{\mathrm H}= -\frac{1}{2\kappa^2}(\Delta u^s-\kappa^2 u^s),\quad u^s_{\mathrm M}= \frac{1}{2\kappa^2}(\Delta u^s+\kappa^2u^s).
\end{align*}
From this, the following relations hold
\begin{align*} 
    u^s=u^s_{\mathrm H}+u^s_{\mathrm M},\quad \Delta u^s=-\kappa^2(u^s_{\mathrm H}-u^s_{\mathrm M}).
\end{align*}
It follows that $u^s_{\text{H}}$ and $u^s_{\text{M}}$ satisfy the Helmholtz and modified Helmholtz equation, respectively,
\begin{align}\label{eqn2:eqns}
    \Delta u^s_{\mathrm H}+\kappa^2 u^s_{\mathrm H}=0,\quad \Delta u^s_{\mathrm M}-\kappa^2 u^s_{\mathrm M}=0,\quad x\in \mathbb R^2\setminus\overline D.
\end{align}
On the boundary $\Gamma$, this would imply that $u^s_{\mathrm H}$ and $u^s_{\mathrm M}$ satisfy the following coupled boundary conditions 
\begin{align}\label{eqn2:coupledbcs}
    u^s_{\mathrm H}+u^s_{\mathrm M}=-u^i,\quad \partial_\nu u^s_{\mathrm H}+\partial_\nu u^s_{\mathrm M}=-\partial_\nu u^i.
\end{align}
Furthermore, $u^s_{\mathrm H}$ and $u^s_{\mathrm M}$ satisfy the following radiation conditions
\begin{align}\label{eqn2:srcs}
    \lim_{r\to\infty} \sqrt{r} (\partial_r u^s_{\mathrm H}-\mathrm{i} \kappa u^s_{\mathrm M})=0,\quad \lim_{r\to\infty}\sqrt{r}(\partial_r u^s_{\mathrm M}-\mathrm{i} \kappa u^s_{\mathrm M})=0,\quad r=|x|.
\end{align}
In summary, the fourth--order biharmonic clamped scattering problem (\ref{eqn:scat1})--(\ref{eqn:src}) is well--posed for any incident direction and is equivalent to the coupled second--order boundary value problem (\ref{eqn2:eqns})--(\ref{eqn2:srcs}).

Now, we recall that the radiating fundamental solution $\Phi_{\text{H}}(x,y)$ of the Helmholtz equation in $\mathbb R^2\setminus\{x\neq y\}$ which is given by
\begin{align}\label{fund_soln_helmholtz}
    \Phi_{\text{H}}(x,y)&=\frac{\mathrm i}{4}H_{0}^{(1)}(\kappa|x-y|)\quad \text{for all}\quad x\neq y,
\end{align}
where $H_{0}^{(1)}$ is the Hankel function of the first kind of order $0$. Similarly, we have that $\Phi_{\text{M}}(x,y)$ is the radiating fundamental solution to the modified Helmholtz equation in $\mathbb R^2\setminus\{x\neq y\}$ which is given by
\begin{align}\label{fund_soln_modhelmholtz}
    \Phi_{\text{M}}(x,y)&=\frac{\mathrm i}{4}H_{0}^{(1)}(\mathrm{i}\kappa|x-y|)\quad \text{for all}\quad x\neq y.
\end{align}
By Green's representation theorem, we have
\[
u_{\mathrm P}^s(x)=\int_{\Gamma}\left(u_{\mathrm P}^s(y)\partial_{\nu(y)}\Phi_{\text{P}}(x,y)-\Phi_{\text{P}}(x,y)\partial_{\nu(y)} u_{\mathrm P}^s(y)  \right)\, \text{d}s(y),\quad x\in \mathbb R^2\setminus\overline D,
\]
for $\text{P}\in \{\text{H},\text{M}\}$.
Since the scattered field $u^s$ is assumed to be radiating, it is known to have the asymptotic behavior
\begin{align}\label{asymptotic_rad}
u^s(x,d)=\frac{\text{e}^{{\rm i}\pi/4}}{\sqrt{8\pi\kappa}} \cdot \frac{\text{e}^{{\rm i}\kappa r}}{\sqrt{r}} \left\{ u^{\infty}(\hat{x},d)+O \left(\frac{1}{r}\right) \right\},\quad \text{as} \,\, \, r \to\infty,
\end{align}
where $u^{\infty}(\hat{x},d)$ is the far--field pattern of $u^s(x,d)$. Notice that we denote the dependence on the incident direction explicitly. For any $\kappa>0$, we see that both 
$$\text{$u^s_{\mathrm M}$ and $\partial_r u^s_{\mathrm M} \,\,\,$ decay exponentially as $\,\,\, r\to\infty$,}$$ 
see, for e.g., \cite{bourgeois2020well}. Just as in the case of studying scattering in a waveguide (see for e.g. \cite{BorceaCMwaveguide2019,Meng-waveguide2021}) we see that the scattered field has a propagation part $u^s_{\text{H}}$ and an evanescent part $u^s_{\text{M}}$. This implies that the far--field pattern for the solution of (\ref{eqn:scat1})--(\ref{eqn:src}) is given by $u^{\infty}=u_{\mathrm H}^{\infty}$, where $u_{\mathrm H}^{\infty}$ is the far--field pattern of $u^s_{\mathrm H}$. From the representation theorem this implies that
\begin{align}\label{green's:ffp}
    u^{\infty}(\hat{x},d)&=\int_{\Gamma}\left(u^s_{\mathrm H}(y,d)\partial_{\nu(y)}\mathrm{e}^{-\text{i}\kappa \hat{x}\cdot y}-\partial_{\nu(y)}u^s_{\mathrm H}(y,d)\mathrm{e}^{-\text{i}\kappa \hat{x}\cdot y}\right)\,\text{d}s(y),
\end{align}
where $\mathrm e^{-\mathrm i\kappa\hat{x}\cdot y}$ is the far--field pattern of $\Phi_{\text{H}}(x,y)$.

This paper is concerned with the inverse scattering problem of recovering the location and approximation of the support of the clamped obstacle $D$ from the knowledge of the far--field pattern $u^{\infty}(\hat{x} , d)$ for all observation directions $\hat{x} \in \mathbb{S}^1$ and a finite number of incident directions $d \in \mathbb{S}^1$, where $\mathbb{S}^1$ denotes the unit circle in $\R^2$. An interesting observation is that, even though the far--field data does not contain any information from $u^s_{\mathrm M}$ directly, it is still able to uniquely recover the clamped cavity $D$ as shown in \cite{Dong2023UniquenessOA}.

\section{Characterization of the Scatterer via ESM}\label{ip-FMbasedESM}
This section is devoted to the analysis of a novel implementation of the extended sampling method (ESM) which was first introduced in \cite{LiuSun2018} for inverse acoustic scattering problems.  Our aim is to improve upon the qualitative reconstruction of clamped obstacles relative to the classical ESM which was studied in \cite{HarrisLiOzochiawaeze2026}. In particular, we aim to develop a method that can accurately recover the location and size of the scatterer from a finite number of incident directions. 

In order to proceed, we first prove an analogous result to Theorem 2.1 in \cite{LiuYangYanSun2026}, i.e., any two bounded obstacles that generate identical non--trivial far--field patterns for the radiation solutions to the Helmholtz equation in their respective exteriors must intersect. This is the main idea we will use to develop a new ESM applied to our inverse biharmonic scattering problem. 

\begin{Theorem}\label{weak_uniqueness_thm}
Let $D_1, D_2 \subset \mathbb{R}^2$ be bounded obstacles, and let $v_1^s$ and $v_2^s$ be radiating solutions to the Helmholtz equation in $\mathbb{R}^2 \setminus D_1$ and $\mathbb{R}^2 \setminus D_2$, respectively. If the non--trivial far--field patterns $v_1^\infty$ and $v_2^\infty$ satisfy that 
$$v_1^\infty(\hat{x}) = v_2^\infty(\hat{x}), \quad \text{for all} \,\,\,\,  \hat{x} \in \mathbb{S}^1, \quad \text{then} \quad \overline D_1 \cap \overline D_2 \neq \emptyset.$$
\end{Theorem}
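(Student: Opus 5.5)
We argue by contradiction and assume $\overline D_1 \cap \overline D_2 = \emptyset$. The plan is to combine Rellich's lemma with unique continuation, so that the two scattered fields glue together into a single entire radiating solution of the Helmholtz equation. Such a solution must vanish, which contradicts the non--triviality of the far--field patterns.

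\emph{Step 1 (Rellich's lemma).} Set $w = v_1^s - v_2^s$. This function is a radiating solution of $\Delta w + \kappa^2 w = 0$ in $\mathbb{R}^2 \setminus (\overline D_1 \cup \overline D_2)$. Its far--field pattern is $v_1^\infty - v_2^\infty = 0$. By Rellich's lemma, $w = 0$ outside a sufficiently large disk.

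\emph{Step 2 (unique continuation).} Since $\overline D_1$ and $\overline D_2$ are disjoint compact sets in $\mathbb{R}^2$, the complement $G = \mathbb{R}^2 \setminus (\overline D_1 \cup \overline D_2)$ is connected. In the setting of the paper both obstacles are bounded open sets with smooth boundaries and connected exteriors, and I would make this assumption explicit, since it is exactly where disjointness and the geometry enter. By analyticity of Helmholtz solutions (unique continuation), $w = 0$ in all of $G$. Hence $v_1^s = v_2^s$ in $G$.

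\emph{Step 3 (gluing into an entire solution).} Define
\[
v = \begin{cases} v_1^s & \text{in } \mathbb{R}^2 \setminus \overline D_1, \\ v_2^s & \text{in } \mathbb{R}^2 \setminus \overline D_2. \end{cases}
\]
This is well defined on the overlap $G$ by Step 2. Because $\overline D_1 \cap \overline D_2 = \emptyset$, the two domains $\mathbb{R}^2 \setminus \overline D_1$ and $\mathbb{R}^2 \setminus \overline D_2$ cover $\mathbb{R}^2$. Indeed, every point of $\overline D_1$ lies in $\mathbb{R}^2 \setminus \overline D_2$, and vice versa. Thus $v$ is a solution of the Helmholtz equation on all of $\mathbb{R}^2$, since being a solution is a local property. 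It also satisfies the Sommerfeld radiation condition, because it coincides with $v_1^s$ near infinity.

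\emph{Step 4 (conclusion).} An entire radiating solution of the Helmholtz equation vanishes identically. One way to see this is Green's representation over a large disk, which gives $v$ as a boundary integral that tends to zero; another is to apply Rellich's lemma again. In either case its far--field pattern vanishes, so $v_1^\infty = v^\infty = 0$, contradicting non--triviality. Therefore $\overline D_1 \cap \overline D_2 \neq \emptyset$.

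I expect the main obstacle to be Step 2, namely justifying that $\mathbb{R}^2 \setminus (\overline D_1 \cup \overline D_2)$ is connected, or at least that unique continuation reaches every point near $\partial D_1 \cup \partial D_2$. In two dimensions, the complement of two disjoint compact sets, each of which has a connected complement, is connected (by Janiszewski's theorem). I would therefore invoke the standing assumption that $\mathbb{R}^2 \setminus \overline D_j$ is connected, and cite this topological fact rather than prove it.
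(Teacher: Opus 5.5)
Your argument is correct and is essentially the paper's proof. Rellich's lemma plus unique continuation give $v_1^s=v_2^s$ in $\mathbb{R}^2\setminus(\overline D_1\cup\overline D_2)$, the two fields glue into an entire radiating Helmholtz solution, and its vanishing contradicts the non-triviality of the far-field pattern. Your version is also more careful than the paper's, which silently uses connectedness of $\mathbb{R}^2\setminus(\overline D_1\cup\overline D_2)$: your observation that this requires each exterior $\mathbb{R}^2\setminus\overline D_j$ to be connected and then follows from Janiszewski's theorem is the right fix, since without it the statement fails (e.g.\ for an annulus $D_1$ with $D_2$ a small disk in its hole). Gluing over the open cover $\{\mathbb{R}^2\setminus\overline D_1,\ \mathbb{R}^2\setminus\overline D_2\}$ is likewise cleaner than the paper's definition $v=v_2^s$ in $D_1$.
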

\begin{proof}
To prove this result, we argue by way of contradiction. Therefore, we will assume that the bounded obstacles are disjoint such that 
\[
\overline D_1 \cap \overline D_2 = \emptyset. 
\]
Since the corresponding far--field patterns are equal, i.e., $v_1^\infty = v_2^\infty$ Rellich's lemma (see, for e.g., Lemma 2.1in  \cite{kirsch2008factorization}) and the unique continuation principle implies that 
\[ v_1^s (x) = v_2^s(x) \quad \text{for all} \,\,\,\,  x \in \mathbb{R}^2 \setminus (\overline{D_1 \cup D_2}). \]
Now, we define the function
\[
v(x) =
\begin{cases}
v_1^s(x), & x \in \mathbb{R}^2 \setminus \overline{D_1}, \\
v_2^s(x), & x \in D_1.
\end{cases}
\]
This is a well--defined smooth function since the two scattered fields coincide in $\mathbb{R}^2 \setminus (\overline{D_1 \cup D_2})$. Moreover, $v$ satisfies the Helmholtz equation
\[
(\Delta + \kappa^2)v = 0 \quad \text{in all of } \,\,\, \mathbb{R}^2,
\]
together with the Sommerfeld radiation condition. Since $v$ is a radiating solution to the Helmholtz equation in all of $\R^2$ we conclude that 
\[
v(x) \equiv 0 \quad \text{for all} \,\,\,\,  x \in \mathbb{R}^2.
\]
In particular, the far--field pattern vanishes, i.e.,
\[
v^\infty = v_1^\infty = v_2^\infty = 0,
\]
which contradicts the assumption that the far--field patterns are non--trivial. Therefore, we obtain that 
\[
D_1 \cap D_2 \neq \emptyset,
\]
as claimed.
\end{proof}

With this result we can revisit the ESM for recovering our clamped obstacle $D$. Even though the unknown obstacle is a clamped region and generates a biharmonic scattered field $u^s$, the far--field pattern is determined only by its Helmholtz component $u_{\mathrm{H}}^s$. This would imply that even though Theorem \ref{weak_uniqueness_thm} is stated for acoustic scatterers, this result can be applied to our biharmonic far--field data.

\subsection{Sound--Soft ESM }
We now revisit the ESM for the inverse biharmonic scattering problem of reconstructing a clamped obstacle from far--field data corresponding to one or finitely many incident directions. Specifically, we aim to determine both the location and an approximation of the size of the region $D$ from the known/measured far--field pattern $u^{\infty}(\hat{x},d)$, for all $(\hat{x},d) \in \mathbb{S}^1\times \mathbb{S}_{\mathrm{inc}}^1$ for a fixed wavenumber $\kappa >0$. Here, in our notation 
$$\mathbb{S}_{\mathrm{inc}}^1\subsetneq \mathbb{S}^1 \quad \text{such that} \quad \mathbb{S}_{\mathrm{inc}}^1 = \{ d_1, d_2, \dots, d_{N_{\mathrm{inc}}} \}$$ 
for some fixed $N_{\mathrm{inc}} \in \mathbb{N}$. The finite collection of incident directions is assumed to satisfy $d_\ell \neq d_{\ell'}$ for all $\ell \neq \ell'$. For instance, if $\mathbb{S}_{\mathrm{inc}}^1 = \{d_1 \}$, the data correspond to a single incident wave. In contrast, for the case of multi--static far--field data one assumes that $u^{\infty}(\hat x,d)$ is known for all $\hat{x} , d \in \mathbb S^1$, i.e., $\mathbb{S}_{\mathrm{inc}}^1=\mathbb S^1$. When one has access to the multi--static far--field data, then the linear sampling or direct sampling methods studied in \cite{HLP-directsample,HarrisLiOzochiawaeze2026} can be used to recover the clamped obstacle.

Throughout the paper, the adjoint of an operator will be denoted by $(\cdot)^{*}$ and the inner product over $L^2(\mathbb S^1)$ by $\langle \cdot \, ,\cdot\rangle_{L^2(\mathbb S^1)}$. We also let $B_0 \subset \mathbb{R}^2$ denote a reference disk centered at the origin, of a fixed radius $R>0$. For a given sampling point $z \in \mathbb{R}^2$, we define the shifted sampling disk 
$$B_z := \{\, x + z \, : \, x \in B_0 \,\}. $$
With this, we let $U^s_{\mathrm{Dir} , B_z}(x,\hat y)$ denote the scattered field in the exterior of $B_z$ corresponding to an incident plane wave incident from direction $\hat y \in \mathbb{S}^1$ with wavenumber $\kappa>0$ that illuminates the sound--soft (i.e. Dirichlet) sampling disk $B_z$. Then $U^s_{\mathrm{Dir} , B_z} \in H_{\mathrm{loc}}^1(\mathbb{R}^2 \setminus \overline{B_z})$ satisfies the exterior boundary value problem
\begin{align}\label{ref_disk_bvp}
\begin{dcases}
\Delta U^s_{\mathrm{Dir} , B_z} + \kappa^2 U^s_{\mathrm{Dir} , B_z} = 0, \quad \text{in } \mathbb{R}^2 \setminus \overline{B_z},\\[1mm]
U^s_{\mathrm{Dir} , B_z} = -  \mathrm{e}^{\mathrm{i} \kappa x\cdot \hat y} , \quad \text{on } \partial B_z,\\[1mm]
\displaystyle \lim_{r \to \infty} \sqrt{r} \left( \partial_r U^s_{\mathrm{Dir} , B_z} - \mathrm i \kappa U^s_{\mathrm{Dir} , B_z} \right) = 0.
\end{dcases}
\end{align}
By separation of variables (see, for e.g., \cite{LiuSun2018}) applied to a sound--soft disk centered at the origin, the far--field pattern of $U^s_{\mathrm{Dir} , B_0}(x,\hat y)$ is given by
\begin{align}\label{sep_var}
U_{\mathrm{Dir} , B_0}^{\infty}(\hat x,\hat y)= -\frac{4}{\text{i}}\left[ \frac{J_0(\kappa R)}{H_0^{(1)}(\kappa R)} + 2\sum_{n=1}^{\infty}\frac{J_n(\kappa R)}{H_n^{(1)}(\kappa R)} \cos\big(n(\theta_x-\theta_y)\big) \right],
\,\,\, \text{ for all} \,\,\, (\hat{x},\hat{y}) \in \mathbb{S}^1\times \mathbb{S}^1,
\end{align}
where $J_n$ and $H_n^{(1)}$ denote the Bessel and Hankel functions of the first kind of order $n$, respectively. Also, in the above series expansion $\theta_x$, $\theta_y$ are the observation and incident angles corresponding to $\hat x$ and $\hat y$.
For a sampling disk centered at $z \in \mathbb{R}^2$, the far--field pattern satisfies the translation relation
\begin{align}\label{trans_prop}
U_{\mathrm{Dir} , B_z}^{\infty}(\hat x,\hat y)
= \mathrm e^{-\mathrm{i}\kappa z\cdot(\hat{x}-\hat{y})} U_{\mathrm{Dir} , B_0}^{\infty}(\hat x,\hat y),
\,\,\, \text{ for all} \,\,\, (\hat{x},\hat{y}) \in \mathbb{S}^1\times \mathbb{S}^1.
\end{align}
One thing to note is that $U_{\mathrm{Dir} , B_z}^{\infty}(\hat x,\hat y)$, obtained from \eqref{sep_var} and \eqref{trans_prop}, is independent of the unknown clamped obstacle $D$. Therefore, we can use this precomputed far--field pattern together with the measured data $u^{\infty}(\hat x,d)$ to derive a qualitative method for recovering $D$.

Let $\mathcal{M}$ denote a sampling domain containing $D$. For the ESM studied in \cite{HarrisLiOzochiawaeze2026} at each sampling point $z \in \mathcal{M}$ and a fixed incident direction $d \in \mathbb{S}^1$, one seeks $g_z \in L^2(\mathbb{S}^1)$ that satisfies the far--field equation 
\begin{align}\label{ESM_Eqn1}
(\mathcal F_{B_z}^{\mathrm{Dir}} g_z)(\hat x) = u^{\infty}(\hat x ,d ), \,\,\, \text{ for all} \,\,\, \hat{x}  \in \mathbb{S}^1,
\end{align}
where the Dirichlet far--field operator associated with the sampling disk $B_z$ is denoted 
\begin{align}\label{ESM_FF_op}
\mathcal F_{B_z}^{\mathrm{Dir}}: L^2(\mathbb{S}^1) \to L^2(\mathbb{S}^1) \quad \text{and is defined by} \quad (\mathcal F_{B_z}^{\mathrm{Dir}} g)(\hat x)
= \int_{\mathbb{S}^1} U_{\mathrm{Dir} , B_z}^{\infty}(\hat x,\hat y)\, g(\hat y)\, \text{d}s(\hat y).
\end{align}
It was shown in \cite{HarrisLiOzochiawaeze2026} that, using the Dirichlet far--field operator, one obtains that roughly speaking, if $D \subset B_z$, then \eqref{ESM_Eqn1} is solvable via any regularization strategy such that the solution $g_z$ has a bounded norm. In contrast, if $D \cap B_z = \emptyset$, then any approximate solution of \eqref{ESM_Eqn1} has an unbounded norm as the regularization parameter tends to zero. Note that this implementation may require {\it a priori} knowledge of the size of $D$ to pick an appropriate radius for the reference disk $B_z$.

We now take a different perspective on the ESM motivated by the result given in Theorem \ref{weak_uniqueness_thm}. This result shows that identical far--field patterns cannot arise from two disjoint obstacles. In particular, it suggests that the compatibility of the measured far--field data $u^\infty$ with that of a sampling disk $B_z$ is inherently linked to whether $B_z$ intersects the true obstacle $D$. This observation provides a framework for characterizing admissible sampling points $z \in \mathcal{M}$ that lie in or near the true obstacle. To determine admissible sampling points our method will use the analytical framework of the factorization method described in \cite{Kirsch1998,kirsch2008factorization}. The previous ESM arguments (see, for e.g. \cite{HuangLiXu2026,LiDengSun2020,LiuSun2018,LiuYangYanSun2026}) use the analytical framework of the linear sampling method. 

With this in mind, we now consider an alternative formulation of the ESM based on the $(F^{*}F)^{1/4}$--factorization method. It is well known that the Dirichlet far--field operator $\mathcal F_{B_z}^{\mathrm{Dir}}$ is compact and normal. For each sound--soft sampling disk $B_z$, it admits the factorization 
\begin{align}\label{factorization}
\mathcal F_{B_z}^{\mathrm{Dir}}
= -\mathcal G_{B_z}^{\mathrm{Dir}} \mathcal S_{B_z}^{*} \mathcal G_{B_z}^{\mathrm{Dir}*}.
\end{align}
Here the data-to-pattern operator 
\begin{align}
\mathcal G_{B_z}^{\mathrm{Dir}}: H^{1/2}(\partial B_z) \to L^2(\mathbb{S}^1) \quad \text{ is defined by} \quad \mathcal G_{B_z} f = V_{B_z}^{\infty},
\end{align}
where $V_{B_z}^{\infty} \in L^2(\mathbb{S}^1)$ is the far--field pattern of the radiating solution $V_{B_z}$ to the exterior scattering problem (\ref{ref_disk_bvp}) with the boundary datum $V_{B_z}=f$ on $\partial B_z$ such that  $f \in H^{1/2}(\partial B_z)$. In addition, $\mathcal S_{B_z}^{*}$ is the adjoint of the single--layer potential operator 
\begin{equation*}
 \mathcal S_{B_z}: H^{-1/2}(\partial B_z)\to H^{1/2}(\partial B_z)\quad \text{ defined by} \quad (\mathcal S_{B_z}\psi)(x)= \int_{\partial B_z}\Phi_{\mathrm H}(x,y)\psi(y)\,\text{d}s(y) \big|_{x\in \partial B_z},
\end{equation*}
where again $\Phi_{\mathrm H}(x,y)$ is the radiating fundamental solution of the Helmholtz equation given by (\ref{fund_soln_helmholtz}). Therefore, the factorization method considers a modified version of equation \eqref{ESM_Eqn1} such that for a fixed incident direction $d \in \mathbb{S}^1$, one seeks $g_z \in L^2(\mathbb{S}^1)$ that satisfies the modified far--field equation
\begin{align}\label{ESM_mod}
    \big((\mathcal F_{B_z}^{\mathrm{Dir}*}\mathcal F_{B_z}^{\mathrm{Dir}})^{1/4} g_z\big)(\hat{x})
    = u^\infty(\hat{x} ,d), \,\,\, \text{ for all} \,\,\, \hat{x} \in \mathbb S^1.
\end{align}
The analysis of the $(F^{*}F)^{1/4}$--factorization method, Theorem 1.24 in \cite{kirsch2008factorization} yields the range identity  
\[
\text{Range}(\mathcal G_{B_z}^{\mathrm{Dir}})=\text{Range}\left( (\mathcal F_{B_z}^{\mathrm{Dir}*}\mathcal F_{B_z}^{\mathrm{Dir}})^{1/4} \right)
\]
provided that $\kappa^2$ is not a Dirichlet eigenvalue of the negative Laplacian in $B_z$. Hence, by the Picard range theorem together with Theorem \ref{weak_uniqueness_thm}, we obtain the following result.

\begin{Theorem}\label{refined_ESM_thm}
Assume that $\kappa^2$ is not a Dirichlet eigenvalue of the negative Laplacian in the sound--soft reference disk $B_z$. If we let the orthonormal eigensystem of the Dirichlet far--field operator $\mathcal{F}_{B_z}^{\mathrm{Dir}}$ be denoted  
$$\big(\lambda_{z}^{(j)}, \varphi_{z}^{(j)} \big) \in \mathbb{C} \setminus \{0\} \times L^2(\mathbb{S}^1).$$ 
Then the indicator function
\[
W(z) = \left[ \sum_{j=1}^\infty \frac{\big|\langle u^{\infty}, \varphi_{z}^{(j)} \rangle_{L^2(\mathbb{S}^1)}\big|^2}{|\lambda_{z}^{(j)}|} \right]^{-1} > 0 
\quad \implies \quad \overline B_z \cap \overline D \neq \emptyset.
\]
\end{Theorem}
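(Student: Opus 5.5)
The plan is a short chain: positivity of $W(z)$ gives membership in a range, the range identity recalled just before the statement turns this into the far--field of a radiating field outside $B_z$, and Theorem \ref{weak_uniqueness_thm} then forces the intersection.

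\textbf{Step 1: from $W(z)>0$ to a range condition.} If $W(z)>0$, the series $\sum_j |\langle u^\infty,\varphi_z^{(j)}\rangle|^2/|\lambda_z^{(j)}|$ converges. The operator $\mathcal F_{B_z}^{\mathrm{Dir}}$ is compact and normal, so $(\mathcal F_{B_z}^{\mathrm{Dir}*}\mathcal F_{B_z}^{\mathrm{Dir}})^{1/4}$ has eigensystem $(|\lambda_z^{(j)}|^{1/2},\varphi_z^{(j)})$. The eigenvalues are nonzero and the eigenfunctions complete, since the far--field operator is injective when $\kappa^2$ is not a Dirichlet eigenvalue of $B_z$. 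Picard's criterion then gives
\[
u^\infty(\cdot,d)\in \text{Range}\big((\mathcal F_{B_z}^{\mathrm{Dir}*}\mathcal F_{B_z}^{\mathrm{Dir}})^{1/4}\big).
\]
Completeness is needed so that $u^\infty$ has no component in the kernel that escapes the Picard sum; this is the standard injectivity-on-the-closure argument of \cite{kirsch2008factorization}.

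\textbf{Step 2: interpret the range condition.} The range identity from Theorem 1.24 in \cite{kirsch2008factorization}, valid under the stated eigenvalue assumption, gives $u^\infty\in\text{Range}(\mathcal G_{B_z}^{\mathrm{Dir}})$. So there is $f\in H^{1/2}(\partial B_z)$ whose radiating solution $V_{B_z}$ of the Helmholtz equation in $\R^2\setminus\overline B_z$ satisfies $V_{B_z}^\infty=u^\infty(\cdot,d)$.

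\textbf{Step 3: apply Theorem \ref{weak_uniqueness_thm}.} As noted after that theorem, $u^\infty$ is the far--field pattern of $u^s_{\mathrm H}$, which is a radiating Helmholtz solution in $\R^2\setminus\overline D$. I apply Theorem \ref{weak_uniqueness_thm} with $D_1=D$, $v_1^s=u^s_{\mathrm H}$, and $D_2=B_z$, $v_2^s=V_{B_z}$. This yields $\overline B_z\cap\overline D\neq\emptyset$.

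\textbf{Main obstacle: nontriviality of $u^\infty$.} Theorem \ref{weak_uniqueness_thm} requires the far--field patterns to be nontrivial. If $u^\infty\equiv 0$, every coefficient vanishes, the sum is $0$, and $W(z)=[0]^{-1}$ is $+\infty$ rather than a finite positive number. I would make the convention explicit: $W(z)>0$ means the series converges to a finite positive value, which in particular excludes $u^\infty\equiv 0$. Alternatively, I would show nontriviality directly. If $u^\infty=0$, Rellich's lemma gives $u^s_{\mathrm H}\equiv 0$ outside $D$. The boundary conditions then give $u^s_{\mathrm M}=-u^i$ and $\partial_\nu u^s_{\mathrm M}=-\partial_\nu u^i$ on $\Gamma$. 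Hence $u^s_{\mathrm M}+u^i$ solves a Helmholtz-type problem whose pieces satisfy different equations. The exponential decay of $u^s_{\mathrm M}$ against the non-decaying plane wave should rule this out, as in the uniqueness arguments of \cite{bourgeois2020well}. I expect this nondegeneracy point to be the step needing the most care; the rest is direct assembly of the quoted results.
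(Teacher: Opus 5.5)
Your proof is correct and follows the paper's argument step for step: Picard's criterion for $(\mathcal F_{B_z}^{\mathrm{Dir}*}\mathcal F_{B_z}^{\mathrm{Dir}})^{1/4}$, the range identity with $\text{Range}(\mathcal G_{B_z}^{\mathrm{Dir}})$, and then Theorem~\ref{weak_uniqueness_thm} applied to $u^s_{\mathrm H}$ and $V_{B_z}$; your remarks on completeness of the eigenfunctions and on $u^\infty\not\equiv 0$ cover points the paper's proof passes over. Your convention already settles nontriviality, and the direct route you sketch can be made precise by a short energy identity: if $u^\infty\equiv 0$ then $u^s_{\mathrm H}\equiv 0$, so $u^s_{\mathrm M}=-u^i$ and $\partial_\nu u^s_{\mathrm M}=-\partial_\nu u^i$ on $\Gamma$, and Green's identity together with the exponential decay gives $\int_{\R^2\setminus\overline D}\big(|\nabla u^s_{\mathrm M}|^2+\kappa^2|u^s_{\mathrm M}|^2\big)\,\mathrm{d}x=-\int_\Gamma \overline{u^i}\,\partial_\nu u^i\,\mathrm{d}s=-\mathrm{i}\kappa\int_\Gamma d\cdot\nu\,\mathrm{d}s=0$, which forces $u^s_{\mathrm M}\equiv 0$ and hence $u^i=0$ on $\Gamma$, impossible since $|u^i|=1$.
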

\begin{proof}
    For the sound--soft disk $B_z$, we have the factorization (\ref{factorization}), i.e., 
    \[\mathcal F_{B_z}^{\mathrm{Dir}}=-\mathcal G_{B_z}^{\mathrm{Dir}}\mathcal S_{B_z}^{*}\mathcal G_{B_z}^{\mathrm{Dir}*},\] 
    where $\mathcal G_{B_z}^{\mathrm{Dir}}: H^{1/2}(\partial B_z)\to L^2(\mathbb S^1)$ is the data-to-pattern operator corresponding to $B_z$. By Picard's range theorem applied to the factorization method range identity \cite{kirsch2008factorization}, we see that 
    $$\text{ $W(z)>0\,\,$ if and only if $\,\,u^{\infty}\in \text{Range} \left((\mathcal F_{B_z}^{\mathrm{Dir}*}\mathcal F_{B_z}^{ \mathrm{Dir} })^{1/4} \right)$}.$$ 
Since we have that the range identity from the factorization method, i.e., 
$$\text{Range}(\mathcal G_{B_z}^{\mathrm{Dir}})=\text{Range} \left((\mathcal F_{B_z}^{\mathrm{Dir}*}\mathcal F_{B_z}^{\mathrm{Dir}} )^{1/4}\right)$$ 
we have that 
$$ \text{$u^{\infty}\in \text{Range}(\mathcal G_{B_z}^{\mathrm{Dir}})\,\,$ if and only if $\,\,W(z)>0$}.$$ 
Recalling the definition of $\mathcal G_{B_z}^{\mathrm{Dir}}$, it follows that there exists $f\in H^{1/2}(\partial B_z)$ such that $u^{\infty}(\hat x)=V_{B_z}^{\infty}(\hat x)$ for all $\hat x\in\mathbb S^1$.  Since $u^{\infty}$ corresponds to the far--field pattern of a solution of the Helmholtz equation in $\R^2 \setminus \overline{D}$ and $V_{B_z}^{\infty}$ is the far--field pattern of a solution of the Helmholtz equation in $\R^2 \setminus \overline{B}_z$ we can appeal to Theorem \ref{weak_uniqueness_thm}, which implies that  
\[
W(z) > 0 \quad \implies \quad \overline B_z \cap \overline D \neq \emptyset,
\]   
completing the proof.
\end{proof}

This result implies that one can use the spectral data for the Dirichlet far--field operator $\mathcal{F}_{B_z}^{\mathrm{Dir}}$ along with the measured far--field data $u^\infty$ corresponding to the clamped obstacle to define a new imaging functional. This is useful in the sense that other qualitative methods need multi--static data whereas the ESM does not. As we will see in our numerical experiments, this method can produce good reconstructions from data produced by only a few incident directions. Also, note that even though we use the sound--soft sampling disk $B_z$ this can be applied to any sampling disk for which the $(F^{*}F)^{1/4}$--factorization method is valid.

\subsection{Sound--Hard ESM}
In this section, we consider a similar ESM where one uses a different sampling disk $B_z$. This is one of the novelties of the ideas presented here, i.e., this method is flexible in the choice of sampling disks. Notice that we mainly rely on Theorem \ref{weak_uniqueness_thm} along with the well-established factorization method to define our imaging functional.  Here we will consider the case when one uses a sound--hard (i.e. Neumann) sampling disk $B_z$ to derive an ESM using the $(F^{*}F)^{1/4}$--factorization method.

To this end, similar to the previous section we let $U^s_{\mathrm{Neu} , B_z}(x,\hat y)$ denote the scattered field in the exterior of $B_z$ corresponding to an incident plane wave with incident direction $\hat y \in \mathbb{S}^1$ and wavenumber $\kappa>0$ that illuminates the sound--hard sampling disk $B_z$. Again, we have that the scattered field $U^s_{\mathrm{Neu} , B_z} \in H_{\mathrm{loc}}^1(\mathbb{R}^2 \setminus \overline{B_z})$ and is the solution to the exterior boundary value problem
\begin{align}\label{ref_disk_bvp2}
\begin{dcases}
\Delta U^s_{\mathrm{Neu} , B_z} + \kappa^2 U^s_{\mathrm{Neu} , B_z} = 0, \quad \text{in } \mathbb{R}^2 \setminus \overline{B_z},\\[1mm]
\partial_\nu U^s_{\mathrm{Neu} , B_z} = -  \partial_{\nu} \mathrm{e}^{\mathrm{i} \kappa x\cdot \hat y} , \quad \text{on } \partial B_z,\\[1mm]
\displaystyle \lim_{r \to \infty} \sqrt{r} \left( \partial_r U^s_{\mathrm{Neu} , B_z} - \mathrm i \kappa U^s_{\mathrm{Neu} , B_z} \right) = 0.
\end{dcases}
\end{align}
Separation of variables implies that the far--field pattern of the reference disk centered at the origin $B_0$ is given by
\begin{align*}
U_{\mathrm{Neu} , B_0}^{\infty}(\hat x,\hat y)= -\frac{4}{\text{i}}\left[ \frac{J_1(\kappa R)}{H_1^{(1)}(\kappa R)} + 2\sum_{n=1}^{\infty}\frac{J_{n-1}(\kappa R) - J_{n+1}(\kappa R)}{H_{n-1}^{(1)}(\kappa R)-H_{n+1}^{(1)}(\kappa R)} \cos\big(n(\theta_x-\theta_y)\big) \right],
\,\,\, \text{ for all} \,\,\, (\hat{x},\hat{y}) \in \mathbb{S}^1\times \mathbb{S}^1.
\end{align*}
Now, by appealing to the fact that the scattering problem \eqref{ref_disk_bvp2} also has the translation relation we obtain that 
\begin{equation}\label{SH_farfield_recursive}
U_{\mathrm{Neu} ,B_z}^{\infty}(\hat{x}, \hat{y}) =  \mathrm{e}^{-\mathrm{i} \kappa z \cdot (\hat{x}-\hat{y})} U_{\mathrm{Neu} , B_0}^{\infty}(\hat x,\hat y),
\,\,\, \text{ for all} \,\,\, (\hat{x},\hat{y}) \in \mathbb{S}^1\times \mathbb{S}^1.
\end{equation}
Again, we have that $J_n$ and $H_n^{(1)}$ are the Bessel and Hankel functions of the first kind of order $n$, respectively, and $\theta_x$, $\theta_y$ are the observation and incident angles corresponding to $\hat x$ and $\hat y$.

Now, to continue deriving another new ESM imaging functional we define the Neumann far--field operator associated with the sampling disk $B_z$, which we denote by
\begin{align}\label{ESM_FF_op2}
\mathcal F_{B_z}^{\mathrm{Neu}}: L^2(\mathbb{S}^1) \to L^2(\mathbb{S}^1) \quad \text{and is defined by 
} \quad (\mathcal F_{B_z}^{\mathrm{Neu}} g)(\hat x) = \int_{\mathbb{S}^1} U_{\mathrm{Neu} , B_z}^{\infty}(\hat x,\hat y)\, g(\hat y)\, \text{d}s(\hat y)
\end{align}
where $U_{\mathrm{Neu} , B_z}^{\infty}(\hat x,\hat y)$ is given by \eqref{SH_farfield_recursive}. Consistent with the sound--soft case, it is well--known that the Neumann far--field operator $\mathcal{F}_{B_z}^{ \mathrm{Neu} }$ is normal and compact. The Neumann far--field operator admits the following factorization derived in Theorem 1.26(a) in \cite{kirsch2008factorization}
\begin{align}\label{SH_factorization}
\mathcal{F}_{B_z}^{\mathrm{Neu}}
= -\, \mathcal{G}_{B_z}^{\mathrm{Neu}} \, 
   \mathcal{N}_{B_z}^{*} \, 
   \mathcal{G}_{B_z}^{\mathrm{Neu}*},
\end{align}
where the hypersingular boundary integral operator is denoted 
\begin{align*}
\mathcal{N}_{B_z}: H^{1/2}(\partial B_z) \to H^{-1/2}(\partial B_z)  \,\,\, \text{and is defined by 
} \,\,\,    (\mathcal{N}_{B_z} \psi)(x) = \frac{\partial}{\partial \nu(x)} \int_{\partial B_z} \frac{\partial \Phi_{\mathrm H}(x, y)}{\partial \nu(y)} \psi(y) \, \text{d}s(y)\big|_{x\in \partial B_z}.
\end{align*}
The data-to-pattern operator for this case is denoted by $\mathcal{G}_{B_z}^{\mathrm{Neu}}: H^{-1/2}(\partial B_z) \to L^2(\mathbb{S}^1)$ and is defined by 
\begin{align}
\mathcal{G}_{B_z}^{\mathrm{Neu}}: H^{-1/2}(\partial B_z) \to L^2(\mathbb{S}^1) \quad \text{and is defined by 
} \quad \mathcal{G}_{B_z}^{\mathrm{Neu}} h = W_{B_z}^{\infty},
\end{align}
where $W_{B_z}^{\infty} \in L^2(\mathbb{S}^1)$ is the far--field pattern of the radiating solution $W_{B_z}$ to the exterior scattering problem (\ref{ref_disk_bvp2}) with the boundary datum $\partial_\nu W_{B_z}=f$ on $\partial B_z$ such that  $f \in H^{-1/2}(\partial B_z)$.

Since we have a symmetric factorization of the operator $\mathcal{F}_{B_z}^{\mathrm{Neu}}$ and by the properties of the hypersingular boundary integral operator we can again appeal to the $(F^{*}F)^{1/4}$--factorization method. Therefore, Theorem 1.26 in \cite{kirsch2008factorization} yields the range identity 
\[
\text{Range}(\mathcal G_{B_z}^{\mathrm{Neu}})=\text{Range} \left( (\mathcal F_{B_z}^{\mathrm{Neu}*}\mathcal F_{B_z}^{\mathrm{Neu}})^{1/4} \right)
\]
if $\kappa^2$ is not a Neumann eigenvalue of the negative Laplacian of $B_z$. Thus, an analogous result to Theorem \ref{refined_ESM_thm} holds for the Neumann far--field operator. 

\begin{Theorem}\label{refined_ESM_thm_neu}
Assume that $\kappa^2$ is not a Neumann eigenvalue of the negative Laplacian in the sound--hard reference disk $B_z$. If we let the orthonormal eigensystem of the Neumann far--field operator $\mathcal{F}_{B_z}^{\mathrm{Neu}}$ be denoted  
$$\big(\lambda_{z}^{(j)}, \varphi_{z}^{(j)} \big) \in \mathbb{C} \setminus \{0\} \times L^2(\mathbb{S}^1).$$ 
Then the imaging functional
\[
W(z) = \left[ \sum_{j=1}^\infty \frac{\big|\langle u^{\infty}, \varphi_{z}^{(j)} \rangle_{L^2(\mathbb{S}^1)}\big|^2}{|\lambda_{z}^{(j)}|} \right]^{-1} > 0 
\quad \implies \quad \overline B_z \cap \overline D \neq \emptyset.
\]
\end{Theorem}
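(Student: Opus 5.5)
The plan is to mirror the proof of Theorem \ref{refined_ESM_thm}, replacing the Dirichlet factorization with the Neumann one. First I would recall the symmetric factorization \eqref{SH_factorization}, $\mathcal{F}_{B_z}^{\mathrm{Neu}} = -\mathcal{G}_{B_z}^{\mathrm{Neu}}\mathcal{N}_{B_z}^{*}\mathcal{G}_{B_z}^{\mathrm{Neu}*}$. Since $\mathcal{F}_{B_z}^{\mathrm{Neu}}$ is compact and normal, it has an orthonormal eigensystem $\big(\lambda_z^{(j)},\varphi_z^{(j)}\big)$. Its singular values are then $|\lambda_z^{(j)}|$, and the operator $(\mathcal F_{B_z}^{\mathrm{Neu}*}\mathcal F_{B_z}^{\mathrm{Neu}})^{1/4}$ has eigenpairs $\big(|\lambda_z^{(j)}|^{1/2},\varphi_z^{(j)}\big)$.

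Picard's range theorem then gives
\[
u^\infty\in \text{Range}\big((\mathcal F_{B_z}^{\mathrm{Neu}*}\mathcal F_{B_z}^{\mathrm{Neu}})^{1/4}\big) \iff \sum_{j=1}^\infty \frac{|\langle u^\infty,\varphi_z^{(j)}\rangle_{L^2(\mathbb S^1)}|^2}{|\lambda_z^{(j)}|}<\infty ,
\]
and the right-hand condition is equivalent to $W(z)>0$. One point needs care here: Picard's criterion also requires $u^\infty$ to lie in the closure of the range. This holds because the range of $\mathcal{F}_{B_z}^{\mathrm{Neu}}$ is dense in $L^2(\mathbb S^1)$ when $\kappa^2$ is not a Neumann eigenvalue, since $\mathcal{F}_{B_z}^{\mathrm{Neu}}$ is then injective and normal. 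So the eigenfunctions form a complete system, and $W(z)>0$ is exactly the Picard condition.

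Next I would invoke the range identity stated just before the theorem, which follows from Theorem 1.26 in \cite{kirsch2008factorization} under the assumption that $\kappa^2$ is not a Neumann eigenvalue of $B_z$:
\[
\text{Range}(\mathcal G_{B_z}^{\mathrm{Neu}})=\text{Range}\big((\mathcal F_{B_z}^{\mathrm{Neu}*}\mathcal F_{B_z}^{\mathrm{Neu}})^{1/4}\big).
\]
The substantive input behind this identity is the coercivity-type property of $\mathcal{N}_{B_z}$, namely that its real part is a compact perturbation of a coercive operator and its imaginary part has a sign. The paper treats this as known. If I had to justify it, this is the step I expect to be the main obstacle, and I would cite Kirsch--Grinberg directly rather than reprove it. Combining the two equivalences, $W(z)>0$ yields some $h\in H^{-1/2}(\partial B_z)$ with $u^\infty=\mathcal{G}_{B_z}^{\mathrm{Neu}}h=W_{B_z}^\infty$.

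Finally I would apply Theorem \ref{weak_uniqueness_thm} with $D_1=D$ and $D_2=B_z$. By \eqref{green's:ffp}, $u^\infty=u^\infty_{\mathrm H}$ is the far-field pattern of the radiating Helmholtz solution $u^s_{\mathrm H}$ in $\R^2\setminus\overline D$, and $W_{B_z}$ is a radiating Helmholtz solution in $\R^2\setminus\overline{B_z}$. These two patterns coincide. The pattern is non-trivial because otherwise $W(z)$ would be $[0]^{-1}$. Alternatively, by Rellich's lemma, $u^\infty=0$ would force $u^s_{\mathrm H}\equiv 0$, which should be excluded for a genuine obstacle; I would note this as an implicit assumption. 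Theorem \ref{weak_uniqueness_thm} then gives $\overline{B_z}\cap\overline D\neq\emptyset$.
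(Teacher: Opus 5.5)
Your proposal is correct and follows the paper's argument; the paper omits this proof as identical to the Dirichlet case. In both, Picard's criterion together with the range identity $\text{Range}(\mathcal G_{B_z}^{\mathrm{Neu}})=\text{Range}\big((\mathcal F_{B_z}^{\mathrm{Neu}*}\mathcal F_{B_z}^{\mathrm{Neu}})^{1/4}\big)$ gives some $h$ with $u^\infty=\mathcal G_{B_z}^{\mathrm{Neu}}h$, and Theorem~\ref{weak_uniqueness_thm} finishes. Your extra checks (completeness of the eigensystem, non-triviality of $u^\infty$) go beyond the paper, but $W(z)=[0]^{-1}$ does not by itself exclude $u^\infty=0$, since that case would give $W\equiv\infty$; non-triviality should instead be justified directly, and it does hold for plane-wave incidence: if $u^\infty=0$ then $u^s_{\mathrm H}\equiv 0$, and Green's identities for $u^s_{\mathrm M}=-u^i$, $\partial_\nu u^s_{\mathrm M}=-\partial_\nu u^i$ on $\Gamma$ (using $|u^i|=1$, $|\nabla u^i|=\kappa$) force $u^s_{\mathrm M}\equiv 0$, contradicting $|u^i|=1$ on $\Gamma$.
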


Note that we have omitted the proof for this case as it is identical to the sound--soft case that was presented in the previous section. It is worth noting that, since we have control over the choice of radius $R$ of the sampling disk in both cases, it can be selected so that $\kappa^2>0$ does not coincide with any Dirichlet or Neumann eigenvalues of the negative Laplacian in $B_z$. Thus, the ESM is applicable for all wavenumbers $\kappa$.

{\bf Choosing the sampling radius $R$:} An important step in implementing the new ESM discussed here is the selection of the radius $R$ of the sampling disk $B_z$. We adopt a multi--scale refinement approach. Specifically, starting from an initial radius $R_0>0$, we consider a sequence of radii
\[
R_p = \gamma^p R_0, \quad p \in \mathbb{N} \cup \{0\}
\]
where $\gamma > 1$ is a fixed growth factor. For successive radii $R_p$, the imaging functional $W(z)$ given in Theorem \ref{refined_ESM_thm} and \ref{refined_ESM_thm_neu} can be computed. To obtain the best reconstruction, one can then pick the first radius corresponding to a reconstruction that does not have any `artifacts'. This approach allows for a more refined exploration of the transition regime in which the sampling disk $B_z$ begins to effectively probe the obstacle $D$. In particular, our empirical observations suggest that the reconstruction quality improves significantly when the radius is comparable to the diameter of the scatterer, i.e. $R_p \approx 0.5\cdot \mathrm{diam}(D)$. This results in a more detailed and stable characterization of the obstacle, yielding an improved reconstruction of its location and approximate size.

{\bf Imaging with multiple incident directions:} The reconstruction obtained by applying Theorem \ref{refined_ESM_thm} and \ref{refined_ESM_thm_neu} can be done with measurements from a single incident direction. In many practical scenarios, the far--field measurements are collected at finitely many incident directions,
\begin{align*}
u^{\infty}(\cdot \, ,d_{\ell}), \quad d_{\ell} \in \{ d_1, d_2, \dots, d_{N_{\mathrm{inc}}} \} = \mathbb{S}^1_{\mathrm{inc}} \subseteq \mathbb{S}^1,
\end{align*}
where $N_{\mathrm{inc}}$ is the number of incident directions. For each incident direction $d_{\ell}$, we define the indicator function for the $\ell^{\text{th}}$--incident direction as
\begin{align} \label{single_W}
W(z; d_{\ell}) = 
\left[ \sum_{j=1}^{\infty} 
\frac{\big| \langle u^\infty(\cdot \, , d_{\ell}), \varphi_z^{(j)} \rangle_{L^2(\mathbb{S}^1)} \big|^2}{|\lambda_z^{(j)}|} \right]^{-1},
\end{align}
where $\big(\lambda_{z}^{(j)}, \varphi_{z}^{(j)} \big) \in \mathbb{C} \setminus \{0\} \times L^2(\mathbb{S}^1)$ is the orthonormal eigensystem for the operator $\mathcal{F}_{B_z}^{\sigma}$ for $\sigma = \{$Dir,Neu$\}$.  To aggregate multiple incident directions, we set
\begin{equation}\label{multi_W}
W_{\mathrm{multi}}(z) = \sum_{\ell=1}^{N_{\mathrm{inc}}} W(z; d_\ell).
\end{equation}
Since $\mathcal{F}_{B_z}^{\sigma}$ is compact, its eigenvalues accumulate at zero. This means that as defined, the imaging functional can be unstable with respect to noisy data. In practice, we stabilize the computation using a truncated sum influenced by the spectral cutoff regularization. Here we only retain the eigenvalues satisfying $|\lambda_z^{(j)}| > \alpha$ for a small threshold $\alpha>0$. 

The regularized indicator function for a single incident direction is then
\begin{align}\label{reg_W}
W_\alpha(z; d_{\ell}) \coloneqq 
\left[ \sum_{j=1}^{\infty} \mathds{1}_{[\alpha, \infty)} \big( |\lambda_j| \big)  \frac{\big| \langle u^\infty(\cdot \, ,d_{\ell}), \varphi_z^{(j)} \rangle_{L^2(\mathbb{S}^1)} \big|^2}{|\lambda_z^{(j)}|} \right]^{-1}, \quad \text{where} \quad \displaystyle{   \mathds{1}_{[\alpha, \infty)} \big( t \big) = \left\{\begin{array}{lr} 1, &  t \geq \alpha,  \\
 				&  \\
 0,&  t < \alpha
 \end{array} \right.} 
\end{align}
and for multiple incident directions we use 
\begin{align}\label{reg_multi_W}
W_\alpha^{\mathrm{multi}}(z) = \sum_{\ell=1}^{N_{\text{inc}}} W_\alpha(z; d_\ell).
\end{align}
The key qualitative property derived from Theorems \ref{refined_ESM_thm} and \ref{refined_ESM_thm_neu} is the support of the imaging functional should be able to approximate the obstacle $D$. We will test this idea in our numerical results section.

\section{Numerical Results}\label{numerics-FMbasedESM}
In this section, we will present numerical reconstructions using the regularized imaging functional defined by \eqref{single_W} and \eqref{multi_W}. We demonstrate that the proposed factorization--based ESM implementation is capable of recovering/approximating the scatterer from a small number of incident directions. As previously stated, since the eigenvalues of the Dirichlet or Neumann far--field operator accumulate at zero, one needs to regularize the imaging functional to ensure stability.

The scatterers that we will consider are a star and peanut--shaped region whose boundaries are defined in Table \ref{scatterers}. Here in Figure \ref{fig:scatterers} we plot the scatterers for a visualization. In our experiments we will also consider the scatterers shifted such that 
$$\partial D = x(t) + (s_1,s_2) , \quad \text{for} \quad 0 \leq t < 2\pi$$
where $(s_1,s_2)$ specifies the center of the scatterer and $x(t)$ is given by Table \ref{scatterers}. 
\begin{table}[H]
\centering
\begin{tabular}{ l l }
\hline
\hline
Scatterer & Parameterization \\
\hline
\vspace{-2.0ex}\\
Star--shaped 
& $x(t) = \displaystyle \big(1 + 0.3\cos(4t)\big)\,(\cos t,\sin t)$ 
\vspace{1.5ex} \\ 
Peanut--shaped 
& $x(t) = \displaystyle \left(\tfrac{1}{2}\sqrt{3\cos^2 t + 1}\right)\,(\cos t,\sin t)$  
\vspace{1.5ex}\\ 
\hline
\end{tabular}
\caption{Boundary parameterizations given by $x(t)$ for $t \in [0,2\pi)$.}
\label{scatterers}
\end{table}

\begin{figure}[h]
\centering
\begin{minipage}{0.4\textwidth}
    \centering
    \includegraphics[width=2.5in]{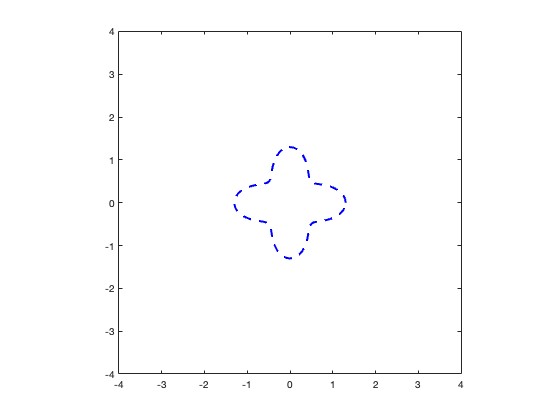}\\
    \small (a) Star--shaped scatterer
\end{minipage}
\begin{minipage}{0.4\textwidth}
    \centering
    \includegraphics[width=2.5in]{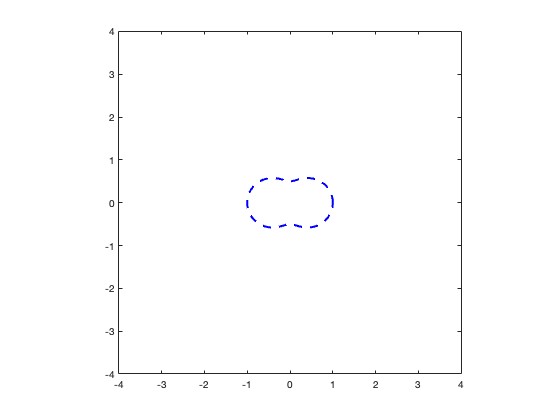}\\
    \small (b) Peanut--shaped scatterer
\end{minipage}
\vspace{-0.5em}
\caption{Visual representations of the two scatterers centered at the origin defined in Table 1.}
\label{fig:scatterers}
\end{figure}

For all of our numerical examples, we use synthetic far--field data corresponding to the two--dimensional biharmonic scattering problem (\ref{eqn:scat1})--(\ref{eqn:src}). Here we use the numerical method developed in \cite{DongHeping2024ANBI} to compute the synthetic data for a clamped obstacle. This method uses a system of boundary integral equations for the equivalent scattering problem given in \eqref{eqn2:eqns}--\eqref{eqn2:srcs}. Using the double and single--layer potential ansatz for the propagating and evanescent fields, it is shown that the system of boundary integral equations is well--posed for a clamped obstacle with smooth boundary. In particular, using this method, the far--field data $u^{\infty}(\hat x,d)$ for $(\hat x,d) \in \mathbb{S}^1 \times \mathbb{S}^1_{\mathrm{inc}}$ is computed. In particular, we define the noisy data vector by
\[
u^\infty_\delta (\hat x_{i} , d ) = u^\infty (\hat x_{i} , d) + \delta \,\frac{\|u^\infty( \cdot \, , d)\|_{2}}{\|\eta\|_{2}}\,\eta(i)
\]
where 
\[
\hat x_{i} = (\cos \phi_{i}, \sin \phi_{i}), \quad \phi_{i} = 2\pi ( i-1)/N, \quad i = 1, \dots, N
\]  
such that $N=32$ in all of our numerical examples. Here $\eta$ is a complex--valued random vector such that the real and imaginary parts of its entries are uniformly distributed on the interval $[-1,1]$ and $0<\delta<1$ denotes the relative noise level. Here, we let $\| \cdot \|_{2}$ denote the standard 2--norm in $\mathbb{C}^{N}$.

Now that we know how the scattering data is computed we need to discuss how the spectral data used to define the imaging functional \eqref{reg_W} and \eqref{reg_multi_W} is computed. To this end, we need to discretize the Dirichlet or Neumann far--field operator such that 
finite--dimensional far--field matrix $\mathbf{F}^\sigma_z \in \mathbb{C}^{N \times N}$ is defined as  
\[
\mathbf{F}^\sigma_z (i,j) = \mathrm e^{-\mathrm{i} \kappa  z \cdot (\hat x_{i} - \hat y_{j}) } U_{\sigma,B_0}^\infty(\hat x_{ i}, \hat y_{ j}), \quad i,j = 1, \dots, N,
\]  
for $\sigma = \{$Dir,Neu$\}$ with equally spaced observation and reference directions  
\[
\hat x_{i} = \hat y_{i} = (\cos \phi_{i}, \sin \phi_{i})^\top, \quad \phi_{i} = 2\pi ( i-1)/N, \quad i = 1, \dots, N.
\]  
Again in all of our examples $N=32$ for simplicity. For the Dirichlet and Neumann far--field operators the kernel function will be approximated via 
$$U_{\mathrm{Dir} , B_0}^{\infty}(\hat x,\hat y)= -\frac{4}{\text{i}}\left[ \frac{J_0(\kappa R)}{H_0^{(1)}(\kappa R)} + 2\sum_{n=1}^{10}\frac{J_n(\kappa R)}{H_n^{(1)}(\kappa R)} \cos\big(n(\theta_x-\theta_y)\big) \right]$$
and 
$$U_{\mathrm{Neu} , B_0}^{\infty}(\hat x,\hat y)= -\frac{4}{\text{i}}\left[ \frac{J_1(\kappa R)}{H_1^{(1)}(\kappa R)} + 2\sum_{n=1}^{10}\frac{J_{n-1}(\kappa R) - J_{n+1}(\kappa R)}{H_{n-1}^{(1)}(\kappa R)-H_{n+1}^{(1)}(\kappa R)} \cos\big(n(\theta_x-\theta_y)\big) \right].$$
This corresponds to truncating the series and only using the first 11--Fourier modes.

In each example, we employ an equally spaced $100 \times 100$ sampling grid over the imaging domain $[-4,4]^2$. So for each sampling point $z$ and incident direction $d_{\ell}$, the corresponding regularized imaging functional derived from \eqref{reg_W} is computed via 
\begin{align*}\label{single_W_finite_noise}
W(z; d_{\ell}) =
\left[ \sum_{j=1}^{N}
 \mathds{1}_{[\alpha, \infty)} \big( |\lambda_j| \big) \frac{\big| u_\delta^\infty(\cdot \, , d_{\ell}) \cdot \mathbf{v}_z^{(j)} \big|^2}{|\lambda_z^{(j)}|} \right]^{-1},
\end{align*}  
where $(\lambda_z^{(j)}, \mathbf{v}_z^{(j)})$ are the eigenvalues and eigenvectors of the far--field matrix $\mathbf{F}^\sigma_z$, obtained via
\[
\mathbf{F}^\sigma_z = \mathbf{V}_z \mathbf{\Lambda}_z \mathbf{V}_z^*, \quad \mathbf{\Lambda}_z = \mathrm{diag}\left(\lambda_z^{(1)}, \dots, \lambda_z^{(N)}\right),
\]
and the sum is taken over the retained modes after truncation with threshold $\alpha>0$. To incorporate multiple incident directions $\{d_\ell\}_{\ell=1}^{N_{\mathrm{inc}}}$, from \eqref{reg_multi_W}, the contributions are aggregated via  
\begin{equation}\label{multi_W_finite_noise}
W_{\mathrm{Recon}}(z) = \sum_{\ell=1}^{N_{\mathrm{inc}}} W(z; d_\ell).
\end{equation}  
Here is the algorithm for the reconstructions.

\begin{algorithm}[H]\label{alg_esm_refined}
\caption{Factorization--Based ESM with Radius Refinement}
\begin{algorithmic}[1]

\STATE \textbf{Input:} initial radius $R_0 > 0$, growth factor $\gamma > 1$, sampling grid $\mathcal{M}$, parameter $\alpha>0$, far--field data $u_\delta^\infty$.

\STATE \textbf{Output:} Contour plot of the imaging functional that approximates the clamped cavity $D$.

\FOR{$p = 0,1,\dots,P_{\text{max}}$}
    \STATE Set radius: $R_p = \gamma^p R_0.$
    \FOR{each sampling point $z \in \mathcal{M}$}
        \STATE Assemble the far--field matrix $\mathbf{F}^\sigma_z$ and compute $[\mathbf{V}_z,\mathbf{\Lambda}_z]$ = eigs$(\mathbf{F}^\sigma_z)$.       
        \STATE Evaluate $W_{\mathrm{Recon}}(z)$ from \eqref{multi_W_finite_noise} at each sampling point $z \in \mathcal{M}$.
    \ENDFOR
    \STATE Check contour plot of the imaging functional for artifacts.
    \STATE {\bf If} the reconstruction has any artifacts set $p=p+1$ and return to Step 4
    \STATE {\bf Else} terminate the iteration and proceed to Step 13
\ENDFOR
\STATE \textbf{Return:}  Contour plot of the imaging functional.
\end{algorithmic}
\end{algorithm}

For our numerical examples, we consider three sets of incident directions that will produce the clamped scattering data. Here, the three subsets of the unit circle we consider are 
\begin{align*}
\mathbb S_{\text{inc},1}^1&=\big\{ d=(\cos \phi, \sin \phi ) \,\, : \,\, \phi= \pi \big\},\\
\mathbb S_{\text{inc},2}^1&=\big\{ d=(\cos \phi, \sin \phi ) \,\, : \,\, \phi= j\pi/4,  j=1,3 \big\},\\
\mathbb S_{\text{inc},4}^1&=\big\{ d=(\cos \phi, \sin \phi) \,\, : \,\, \phi= j\pi/2,  j=0,1,2,3 \big\}
\end{align*}
i.e. we consider the case when $N_{\text{inc}}$=1, 2 or 4. We will use $N=32$ observation directions in all reconstructions. For all ESM reconstructions, the regularization parameter is empirically chosen to be $\alpha=10^{-4}$. With this, we have all we need to proceed with our numerical experiments.

\subsection{Effect of the radius for the sampling disk}
We first examine the role of the radius for the sampling disk $R$ in the performance of the factorization--based ESM. The choice of $R$ is critical, as it governs the scale of the auxiliary test domain used in the reconstruction procedure. At first glance, one might assume that the radius $R$ should be a small quantity but in our experiments we see that this is not the case. This seems to be due to the fact that each fixed eigenvalue of the Dirichlet and Neumann far--field operators tend to zero as $R \to 0$.

As previously stated, from our testing we see that the sampling disk with $R \approx 0.5\cdot\mathrm{diam}(D)$ gives optimal results. In order to validate our claim, we provide a few numerical examples in this section. In practice, we adopt an adaptive strategy consistent with the given algorithm. We select radii from the sequence $R_p = (1.25)^p \cdot 0.5$, starting from an initial value and increasing $p$ until a stable reconstruction is observed. This procedure is motivated by the heuristic principle that an appropriate reconstruction is obtained when the diameter of the sampling disk is comparable to that of the unknown scatterer.

We begin with reconstructions of the peanut--shaped obstacle. In Figure \ref{findR-peanutSS} and \ref{findR-peanutSH}, we consider the case of two incident directions corresponding to $\mathbb S_{\text{inc},2}^1$ where the obstacle is centered at the point $(1,1)$. This corresponds to the boundary of the scatterer being given by 
$$\partial D  = \left(\tfrac{1}{2}\sqrt{3\cos^2 t + 1}\right)(\cos t,\sin t)+(1,1)$$ 
Here we fix the wavenumber $\kappa = 2$. First we investigate the effect of the radius using the sound--soft (Dirichlet) sampling disk in Figure \ref{findR-peanutSS} then provide the reconstructions using the sound--hard (Neumann) sampling disk in Figure \ref{findR-peanutSH}. In both sets of reconstructions, we take the noise level in the data to be $\delta = 0.02$.

\begin{figure}[htp]
    \centering
    \begin{subfigure}[t]{0.3\textwidth}
        \centering
       \includegraphics[scale=0.2]{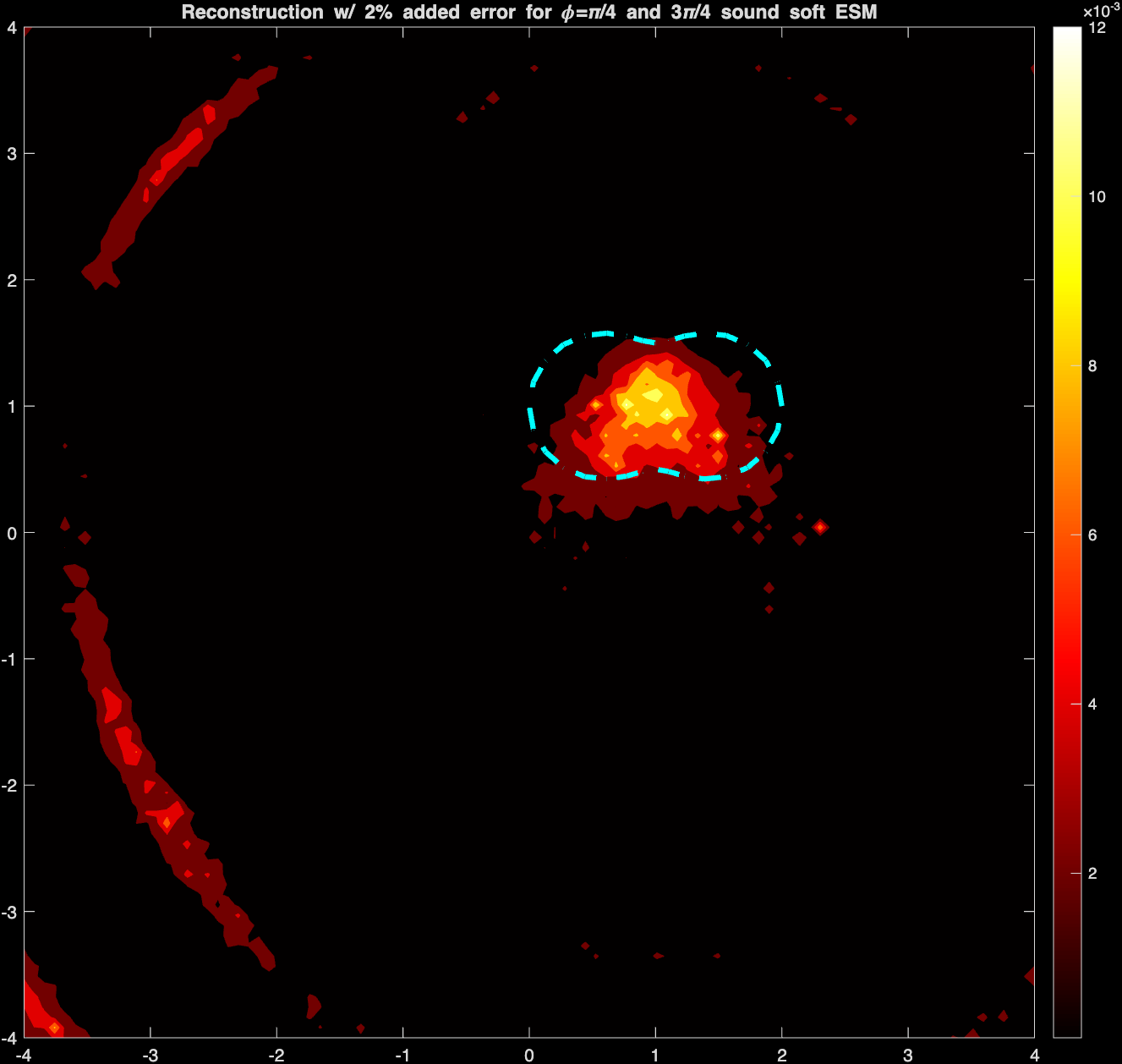}
        \caption{$p=0$ ;  $R=0.5$}
    \end{subfigure}%
    ~ 
    \begin{subfigure}[t]{0.3\textwidth}
        \centering
        \includegraphics[scale=0.2]{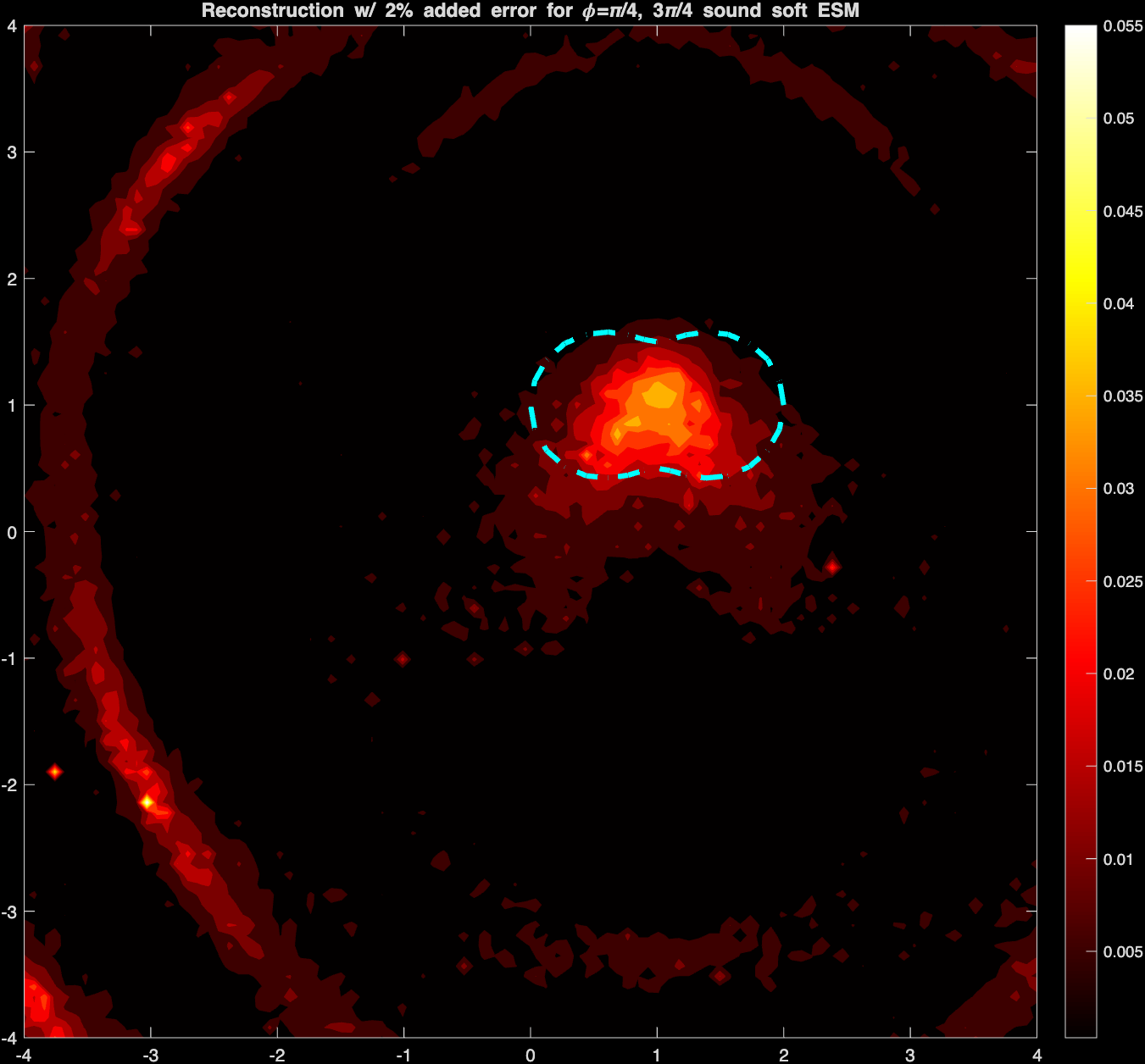}
        \caption{$p=1$ ;  $R=0.6250$}
    \end{subfigure}%
       ~ 
    \begin{subfigure}[t]{0.3\textwidth}
        \centering
        \includegraphics[scale=0.2]{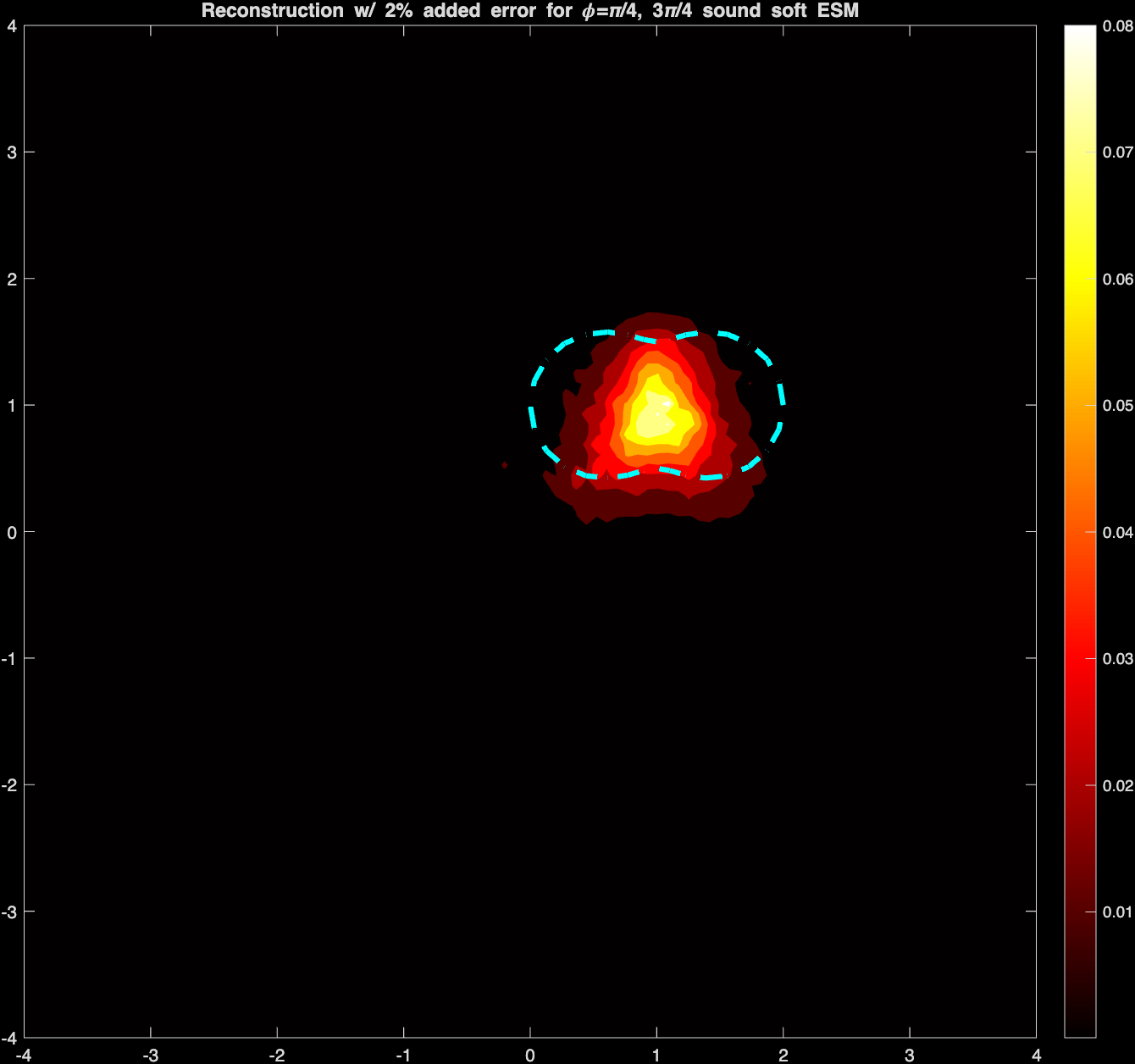}
        \caption{$p=2$ ;  $R=0.7812$}
    \end{subfigure}%
    \caption{Reconstruction of a peanut--shaped obstacle centered at $(1,1)$ with 2$\%$ added noise using the sound--soft (Dirichlet) factorization--based ESM, with two incident directions. }\label{findR-peanutSS}
\end{figure}

\begin{figure}[htp]
    \centering
    \begin{subfigure}[t]{0.3\textwidth}
        \centering
       \includegraphics[scale=0.2]{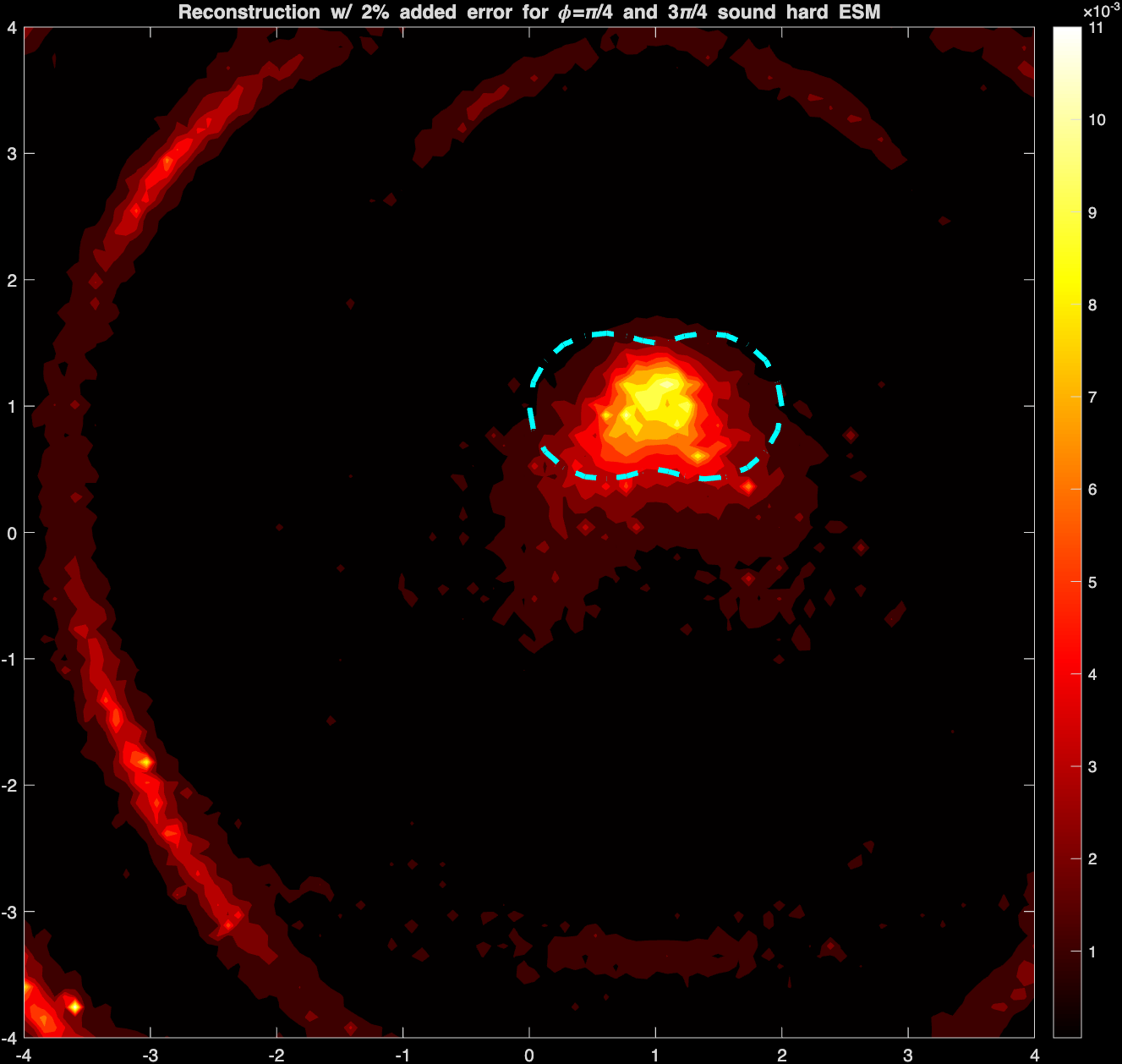}
        \caption{$p=0$ ;  $R=0.5$}
    \end{subfigure}%
    ~ 
    \begin{subfigure}[t]{0.3\textwidth}
        \centering
        \includegraphics[scale=0.2]{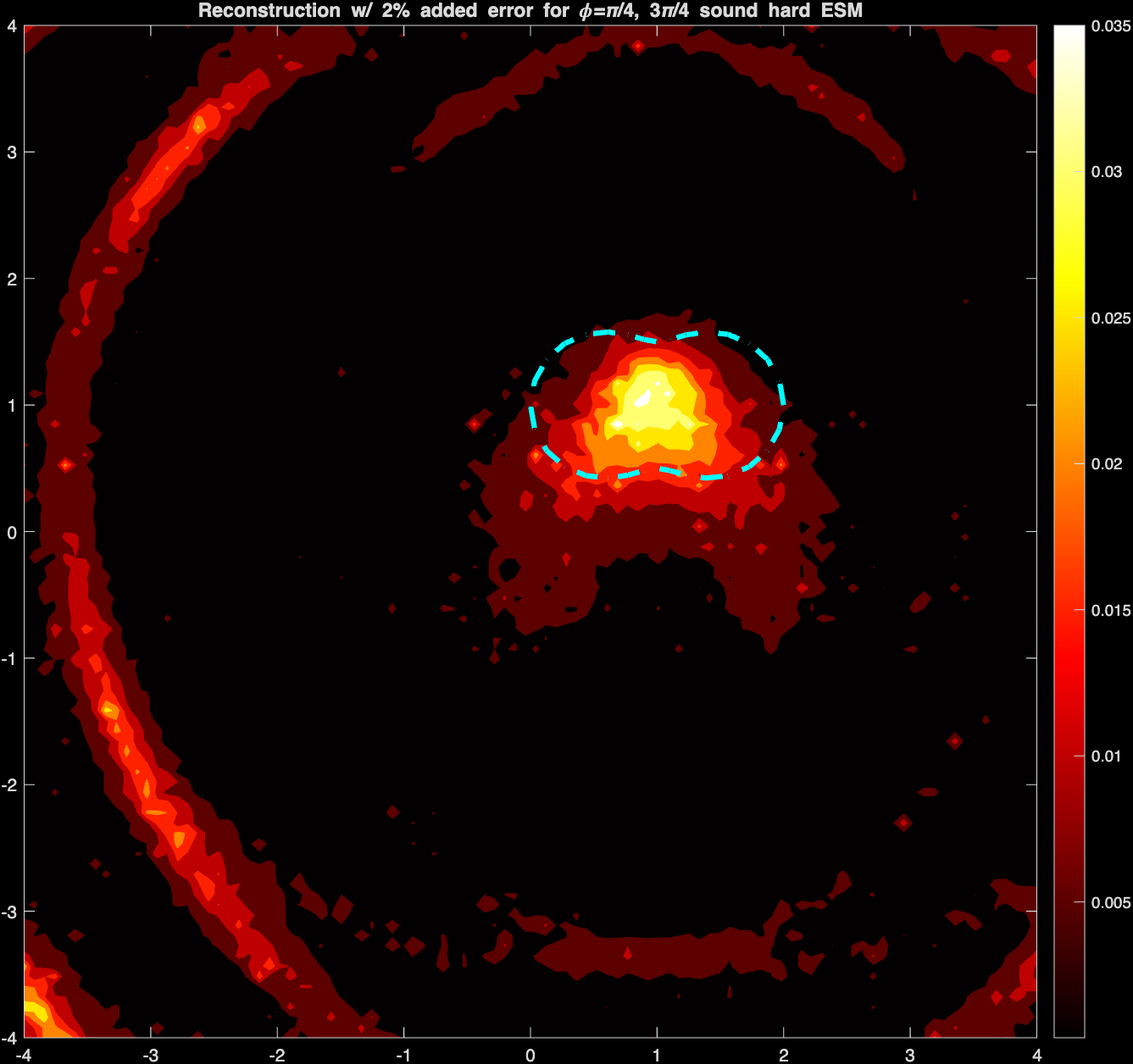}
        \caption{$p=1$ ;  $R=0.6250$}
    \end{subfigure}%
       ~ 
    \begin{subfigure}[t]{0.3\textwidth}
        \centering
        \includegraphics[scale=0.2]{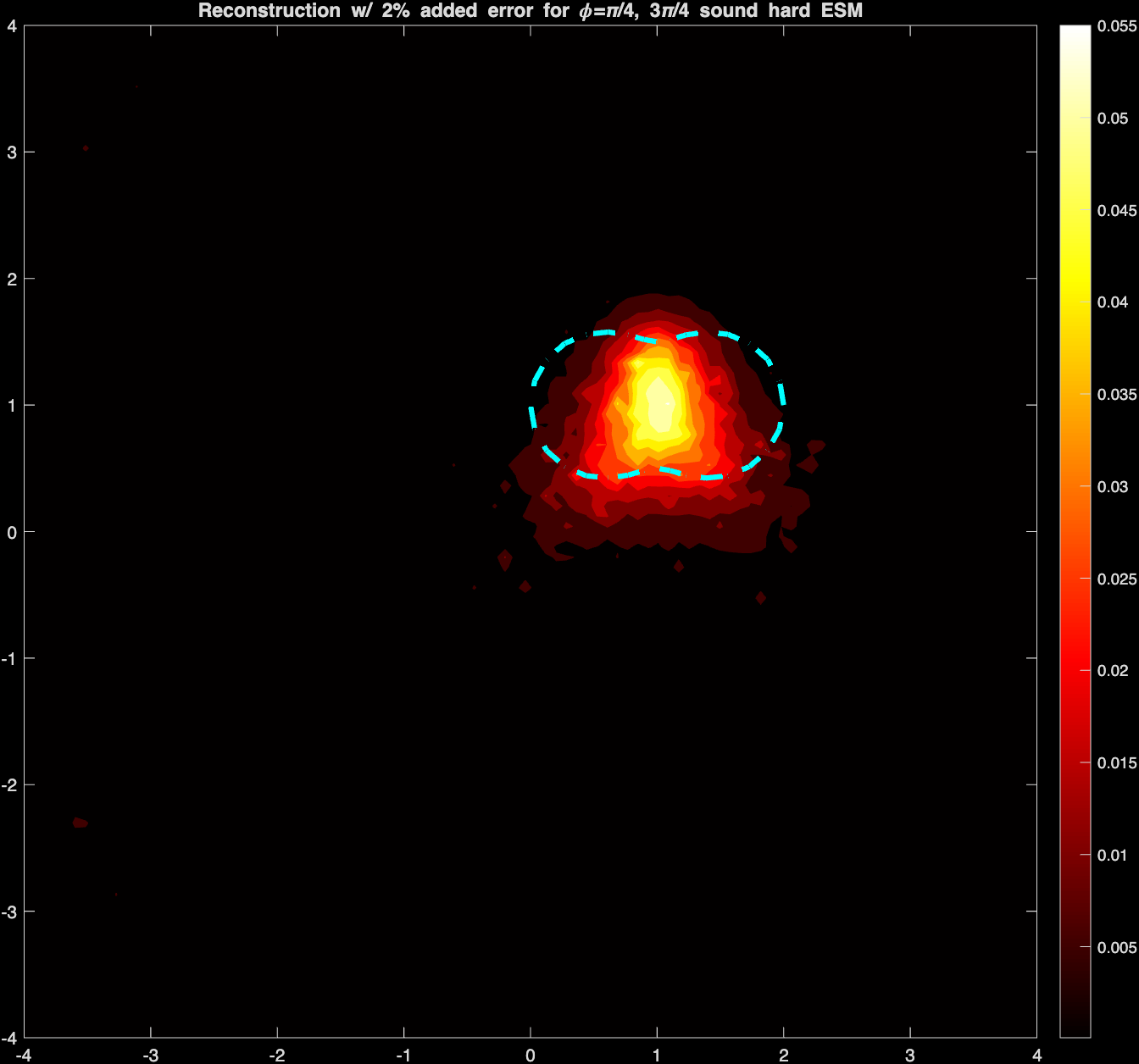}
        \caption{$p=2$ ;  $R=0.7812$}
    \end{subfigure}%
    \caption{Reconstruction of a peanut--shaped obstacle centered at $(1,1)$ with 2$\%$ added noise using the sound--hard (Neumann) factorization--based ESM, with two incident directions. }\label{findR-peanutSH}
\end{figure}

From Figure \ref{findR-peanutSS} and \ref{findR-peanutSH}, we see that when the diameter of the sampling disk is sufficiently smaller than the diameter of $D$, that the reconstruction has `artifacts'. When the diameter of the sampling disk is comparable to the diameter of $D$, that the reconstruction is localized inside the scatterer. This is useful since we can also get a sense of the size of the scatterer using this procedure.

We now provide reconstructions of the star--shaped obstacle centered at  $(1,1)$. In Figure \ref{findR-starSH} we consider the case of a single incident direction corresponding to $\mathbb S_{\text{inc},1}^1$, where the reconstructions are obtained using the sound--hard (Neumann) sampling disk. This corresponds to the boundary of the scatterer being given by 
$$\partial D  = \big(1 + 0.3\cos(4t)\big)(\cos t,\sin t)+(1,1)$$ 
Here we fix the wavenumber $\kappa = 2$ and we take the noise level in the data to be $\delta = 0.02$ as in the previous examples. Again, we see that when the sampling diameter is comparable to the diameter of $D$, the reconstructions are better and contain no artifacts. Due to the use of a single incident direction, the reconstruction exhibits a directional bias. The portion of the obstacle facing the incoming wave is more clearly resolved, while regions in the shadow of the wave are less visible.

\begin{figure}[H]
    \centering
    \begin{subfigure}[t]{0.3\textwidth}
        \centering
       \includegraphics[scale=0.2]{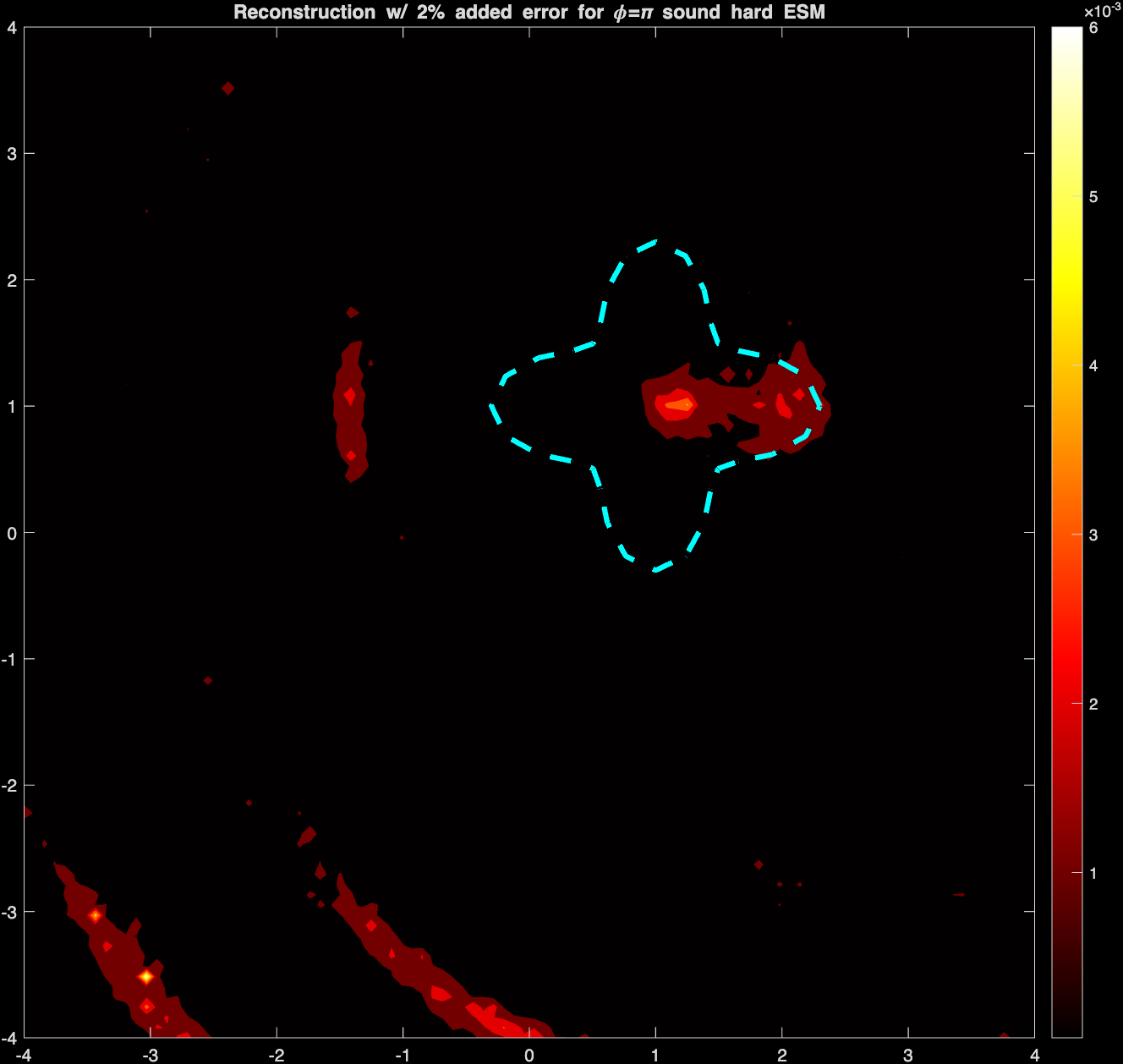}
        \caption{$p=0$ ;  $R=0.5$}
    \end{subfigure}%
    ~ 
    \begin{subfigure}[t]{0.3\textwidth}
        \centering
        \includegraphics[scale=0.2]{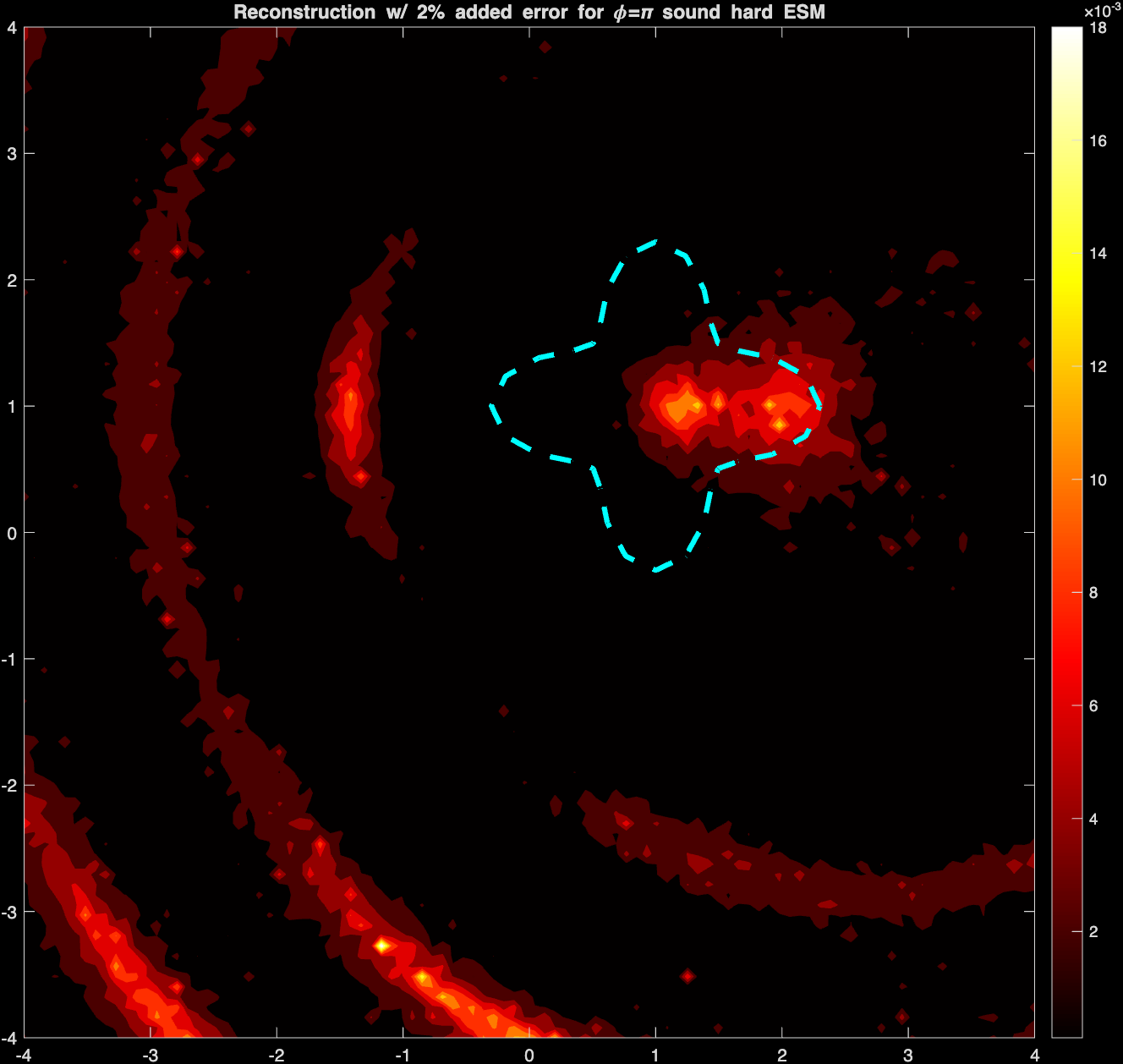}
        \caption{$p=1$ ;  $R=0.6250$}
    \end{subfigure}
       ~ 
    \begin{subfigure}[t]{0.3\textwidth}
        \centering
        \includegraphics[scale=0.2]{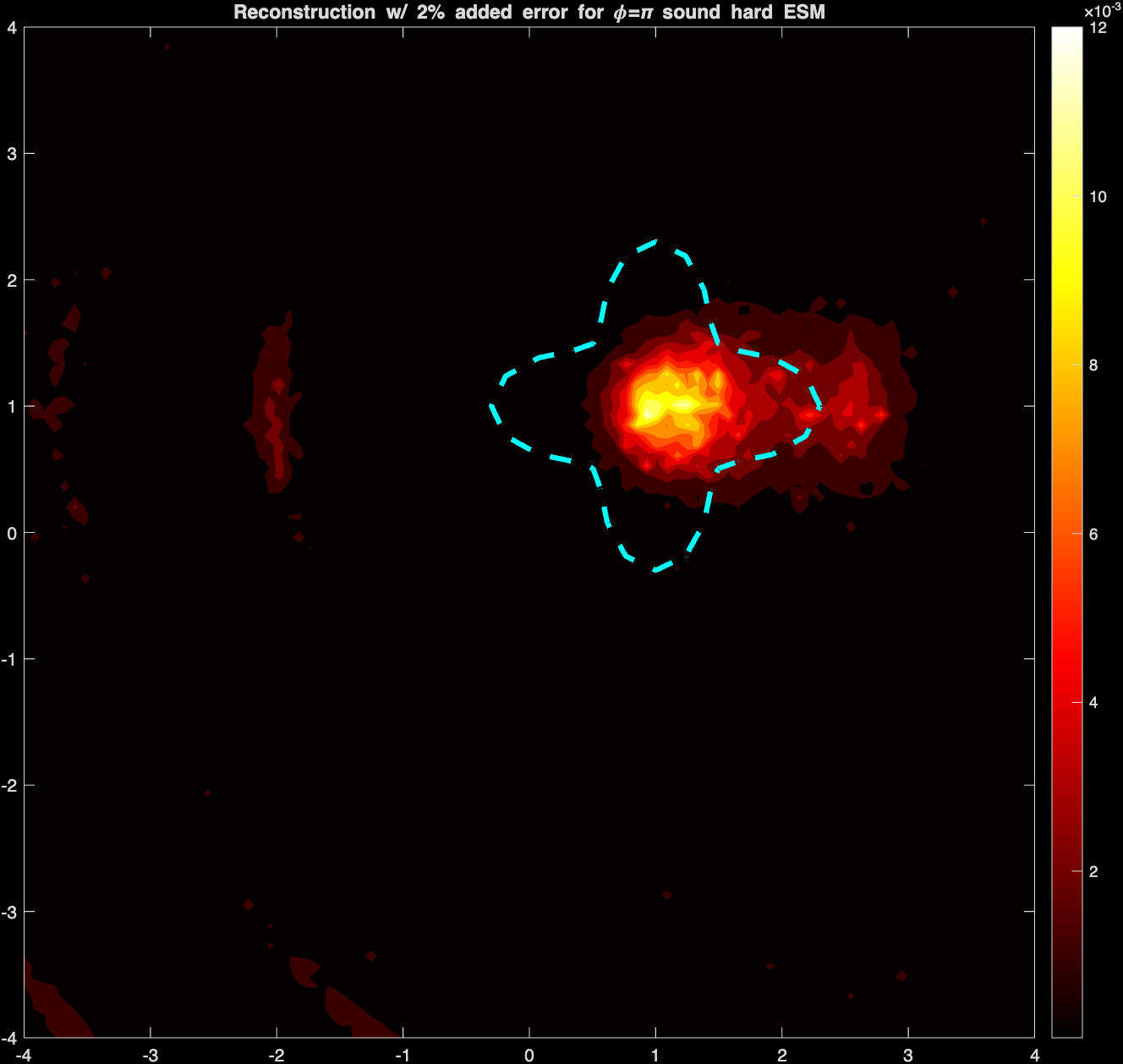}
        \caption{$p=2$ ;  $R=0.7812$}
    \end{subfigure}%
       ~ 
    \begin{subfigure}[t]{0.3\textwidth}
        \centering
        \includegraphics[scale=0.2]{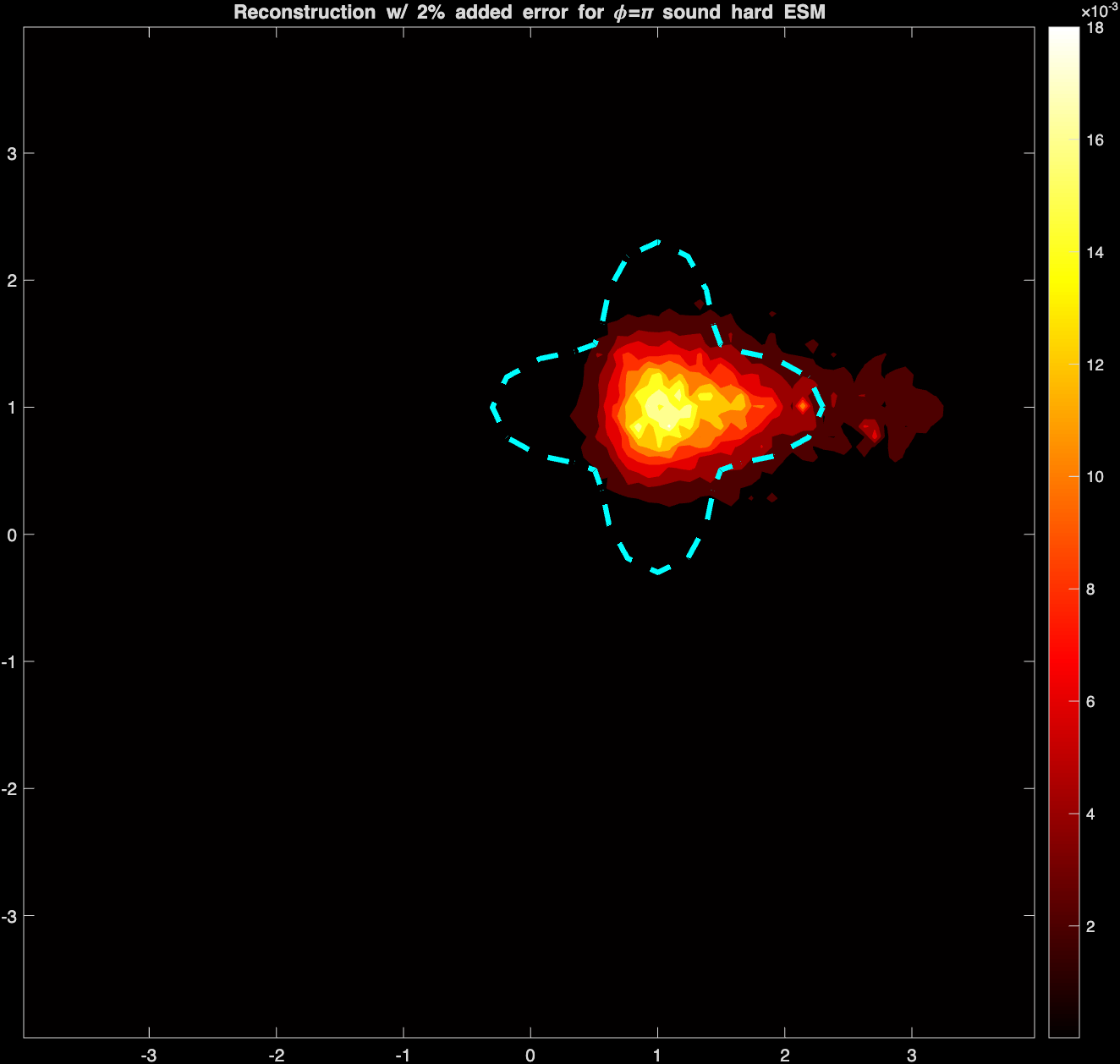}
        \caption{$p=3$ ;  $R=0.9766$}
    \end{subfigure}
    \caption{Reconstruction of a star--shaped obstacle centered at (1,1) with 2$\%$ added noise using the sound--hard (Neumann) factorization--based ESM, with one incident direction. }\label{findR-starSH}
\end{figure}

We now consider the case of four incident directions corresponding to $\mathbb S_{\text{inc},4}^1$. This corresponds to the dataset that has the most information. This is because we assume that the scattering data is known at $N=32$ receivers placed all around the scatterer and produced from $N_{\text{inc}}=4$ sources. Notice that the four sources in $\mathbb S_{\text{inc},4}^1$ correspond to the axes in $\R^2$. This can be seen as information collected from all around the scatterer but from a small number of sources. In Figure \ref{findR-starSS}, we will again consider the star--shaped obstacle which is now centered at $(-1,-1)$   given by 
$$\partial D  = \big(1 + 0.3\cos(4t)\big)(\cos t,\sin t)-(1,1).$$ 
Here we take sound--soft (Dirichlet) sampling disk for our reconstruction with the wavenumber $\kappa = \pi$. We take a larger noise level in the data, which is given by $\delta = 0.1$ to test the stability of the reconstructions. 

\begin{figure}[htp]
    \centering
    \begin{subfigure}[t]{0.3\textwidth}
        \centering
       \includegraphics[scale=0.244]{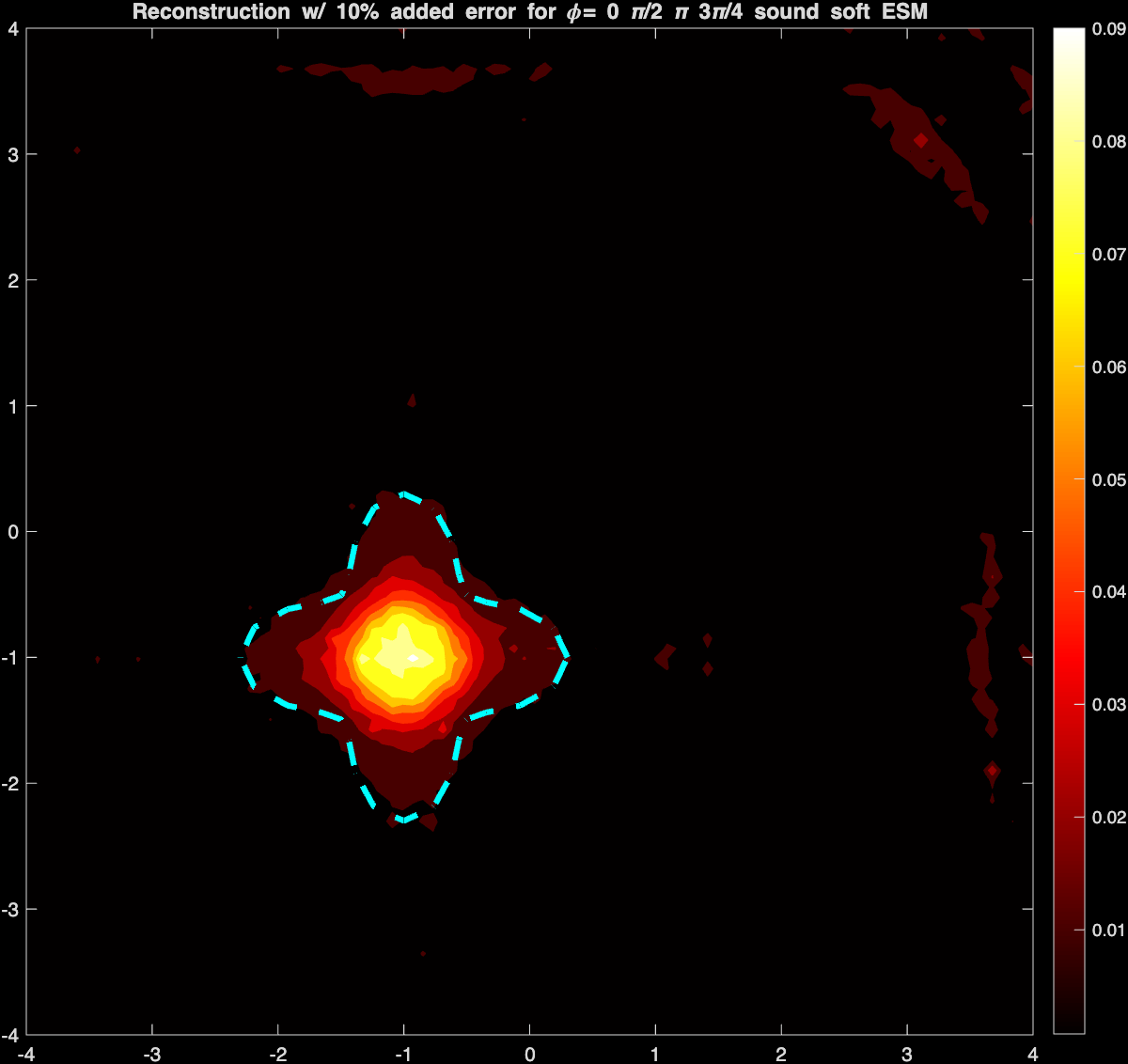}
        \caption{$p=3$ ;  $R=0.9766$}
    \end{subfigure}%
    ~ 
    \begin{subfigure}[t]{0.3\textwidth}
        \centering
        \includegraphics[scale=0.244]{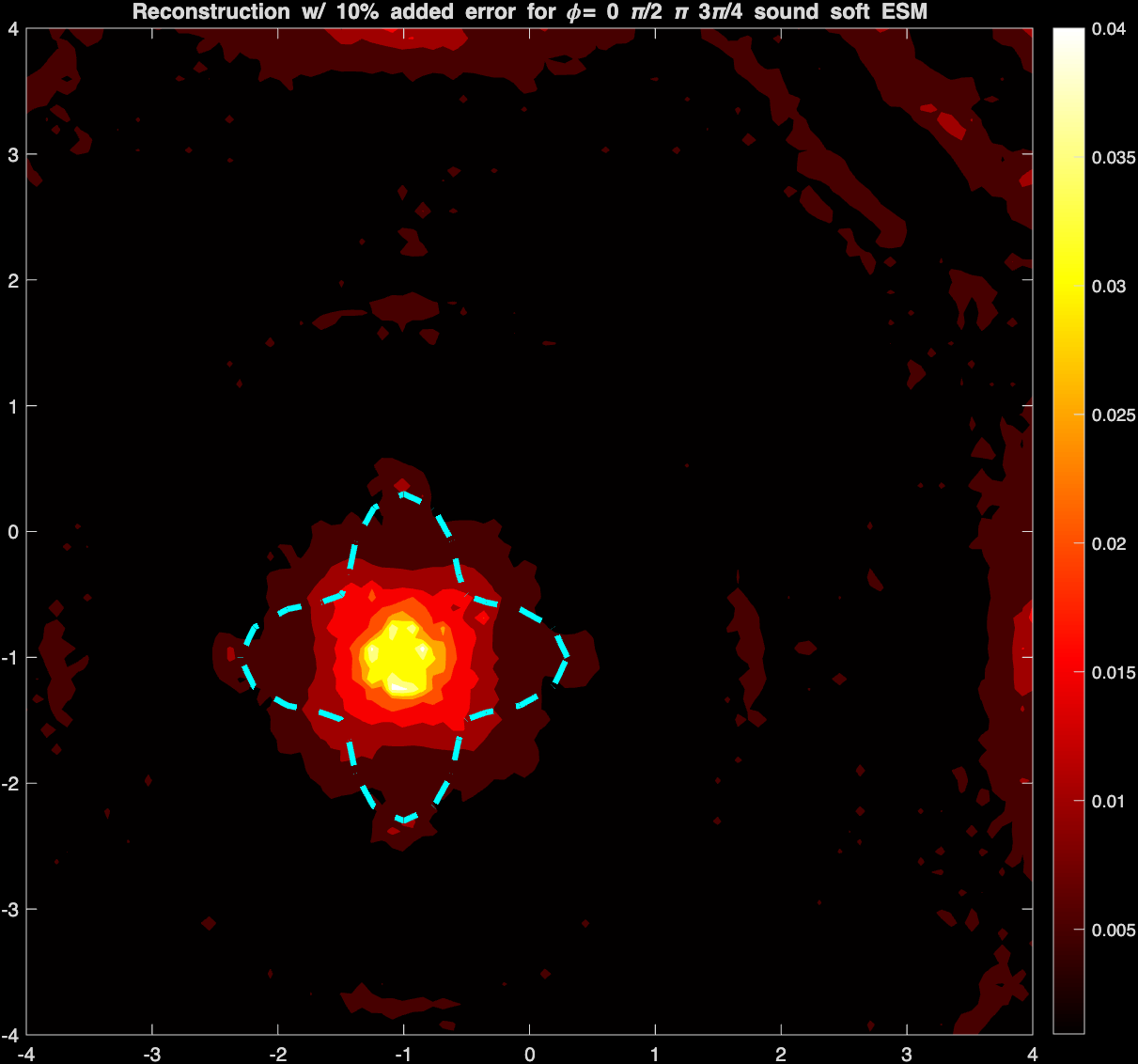}
        \caption{$p=4$ ;  $R=1.2207$}
    \end{subfigure}%
       ~ 
    \begin{subfigure}[t]{0.3\textwidth}
        \centering
        \includegraphics[scale=0.22]{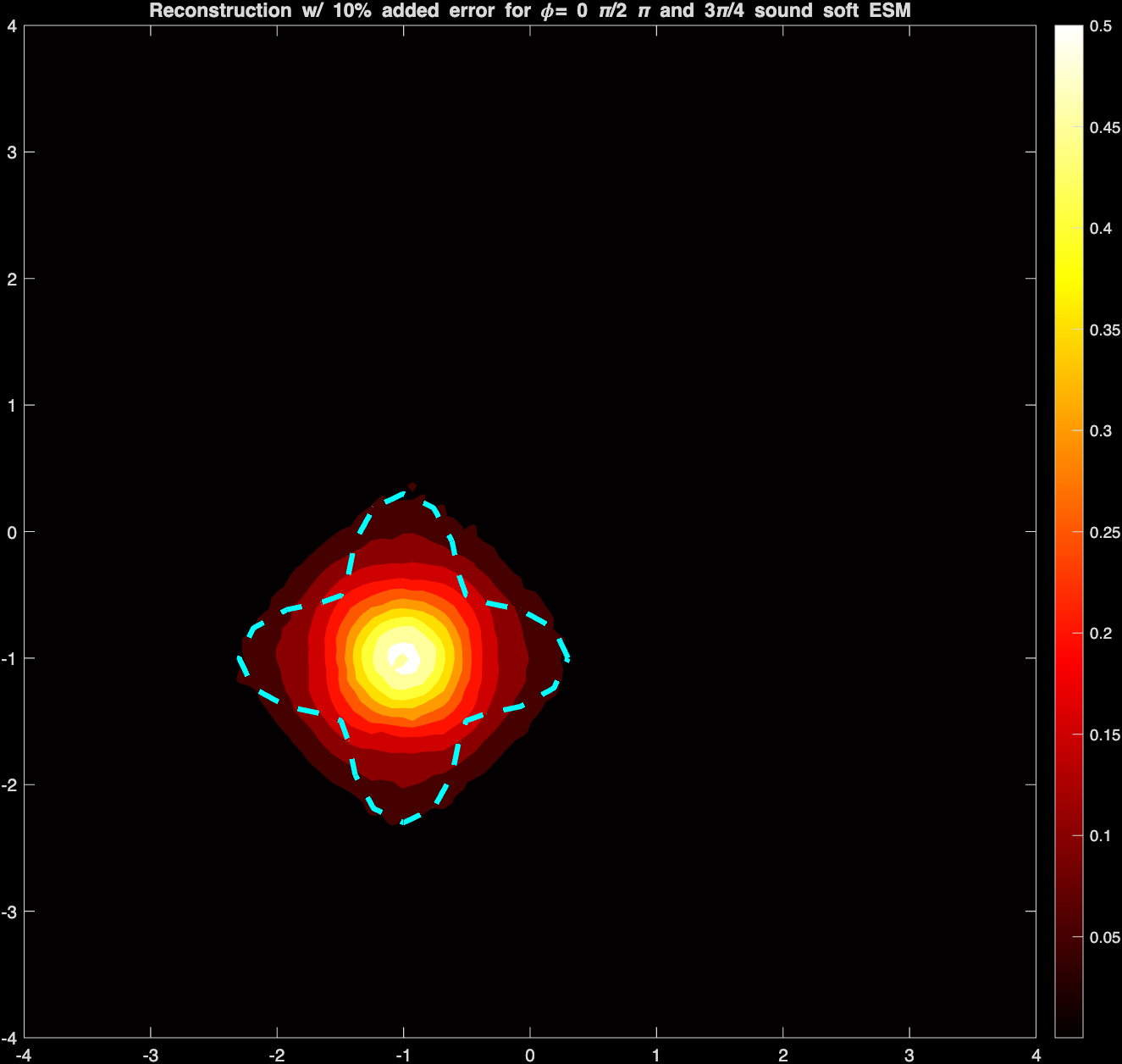}
        \caption{$p=5$ ;  $R=1.5259$}
    \end{subfigure}%
    \caption{Reconstruction of a star--shaped obstacle centered at $(-1,-1)$ with 10$\%$ added noise using the sound--soft (Dirichlet) factorization--based ESM, with four incident directions.}\label{findR-starSS}
\end{figure}

These results illustrate the influence of both the number of incident directions and the choice of reference disk on reconstruction stability under noise. In all cases, the obstacle location is accurately identified and remains robust with respect to noise and reference type. Notably, incorporating two or four incident directions leads to improved stability, with the indicator more tightly concentrated around the true obstacle location for both sound--soft and sound--hard configurations. Although the use of multiple incident directions improves the reconstruction and reduces directional bias, it does not remove the dependence on the probing radius. Rather, these two factors play complementary roles: the radius governs the effective scale of the reconstruction, while the incident directions enhance angular resolution and overall stability.

\subsection{Comparison using sound--soft vs sound--hard sampling disk}

Now we wish to investigate how our ESM compares when one uses either the sound--soft or sound--hard sampling disk. As we have seen in our previous examples, both cases can successfully identify the location of the scatterer. In particular, we see that when one has data from multiple incident directions the size and shape of the scatterer can also be approximated. Here we wish to see if there is a noticeable difference in using either sound--soft or sound--hard sampling disk. To this end, we use the same procedure described in the previous section to reconstruct the scatterers. In the following examples, we consider that the peanut--shaped scatterer is centered at $(1,1)$ and the star--shaped scatterer is centered at $(-1,-1)$. 

We first start with an example where we use the set of incident directions given by $\mathbb{S}_{\text{inc},4}^1$. As we saw in the previous examples, even with only four incident directions we have a fairly good reconstruction of the location and size of the scatterer. In that example, the sound--soft sampling disk was used and the optimal radius was given by $R=1.5259$. Now, in Figure \ref{compare-star4inc} we compare the reconstructions sound--soft or sound--hard sampling disk for the star--shaped scatterer. We again use the aforementioned procedure to determine the optimal sampling radius. For this numerical example we let the wavenumber $\kappa=\pi$ and $\delta = 0.1$ which corresponds to $10\%$ relative noise added to the data. Here, we only present the reconstructions given by the optimal radius. 

\begin{figure}[H]
    \centering
    \begin{subfigure}[t]{0.4\textwidth}
        \centering
        \includegraphics[scale=0.24]{Star_SS4Inc_p5_kpi}
        \caption{sound--soft reconstruction w/ $\mathbb S_{\text{inc},4}^1$ }
    \end{subfigure}%
       ~ 
    \begin{subfigure}[t]{0.4\textwidth}
        \centering
        \includegraphics[scale=0.24]{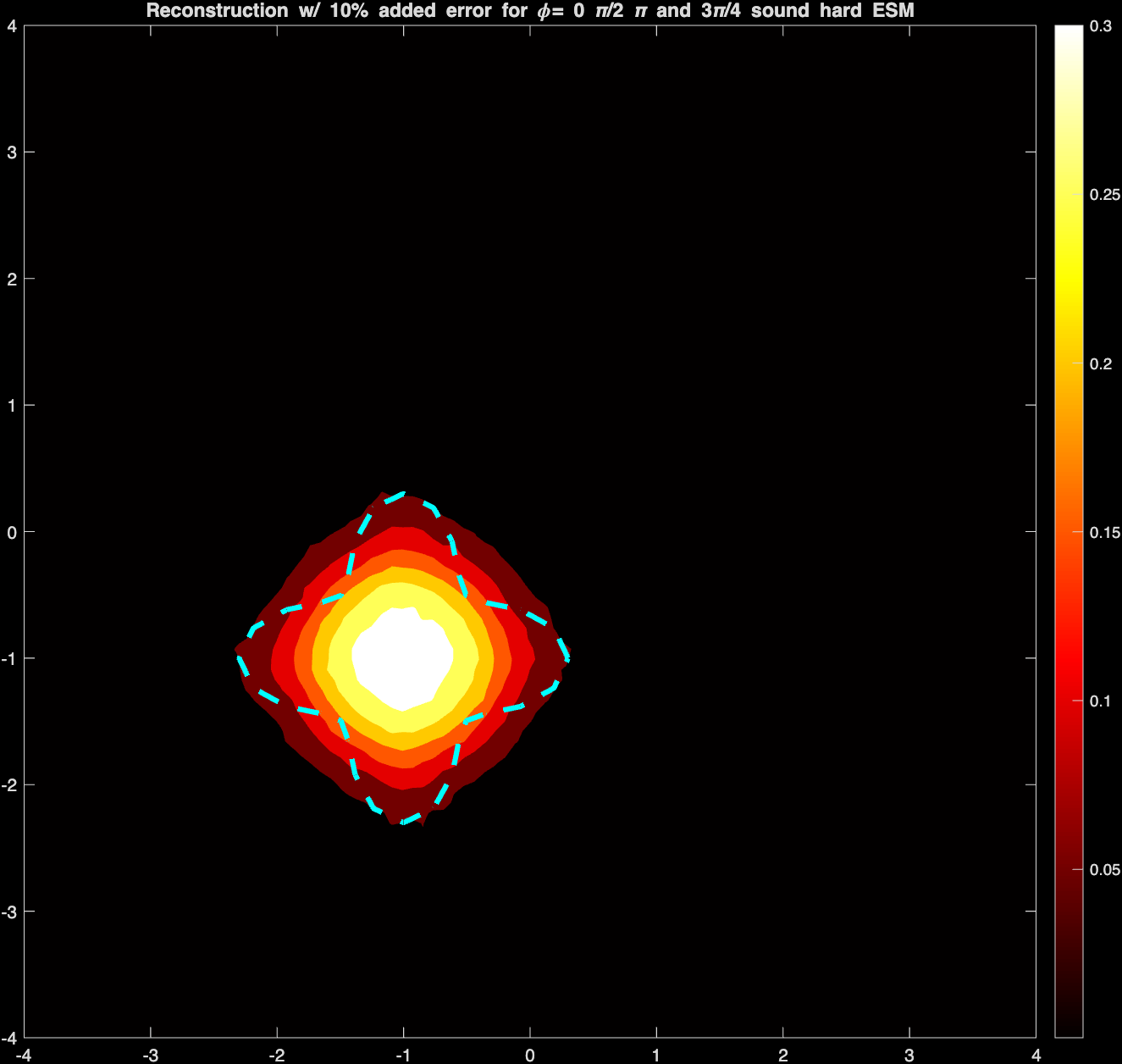}
        \caption{sound--hard reconstruction w/ $\mathbb S_{\text{inc},4}^1$}
    \end{subfigure}%
    \caption{Comparison of the reconstruction given by the sound--soft and sound--hard sampling disks for the star--shaped obstacle centered at $(-1,-1)$ with 10$\%$ added noise factorization--based ESM, with four incident directions.}\label{compare-star4inc}
\end{figure}

We again see that with four incident directions, we can obtain a good reconstruction of the location and size of the obstacle. This is useful since most qualitative reconstruction methods require data from a large number of incident directions. There have been recent theoretical and computational works on the case when one does not have the full multi--static dataset (for, e.g., \cite{DouLiuMengZhang2022,LiuSun2019}). Here we see that this gives the best reconstruction since one has data from the incident fields all around the object, but only a small number of directions.

In Figure \ref{compare-peanut4inc}, we provide the reconstruction of the peanut--shaped obstacle centered at $(1,1)$. We compare the reconstructions with sound--soft and sound--hard sampling disks. In this example, the wavenumber $\kappa=2$ and $\delta = 0.02$ which corresponds to only having $2\%$ relative noise added to the data. As in our previous comparison, we only present the reconstructions given by the optimal radius which is, in both cases, $R=0.7812$. Note that we see that the reconstructions are comparable for both cases when one uses the set of incident directions given by $\mathbb{S}_{\text{inc},4}^1$. Here, we only present the reconstructions given by the optimal radius. 

\begin{figure}[H]
    \centering
    \begin{subfigure}[t]{0.4\textwidth}
        \centering
        \includegraphics[scale=0.24]{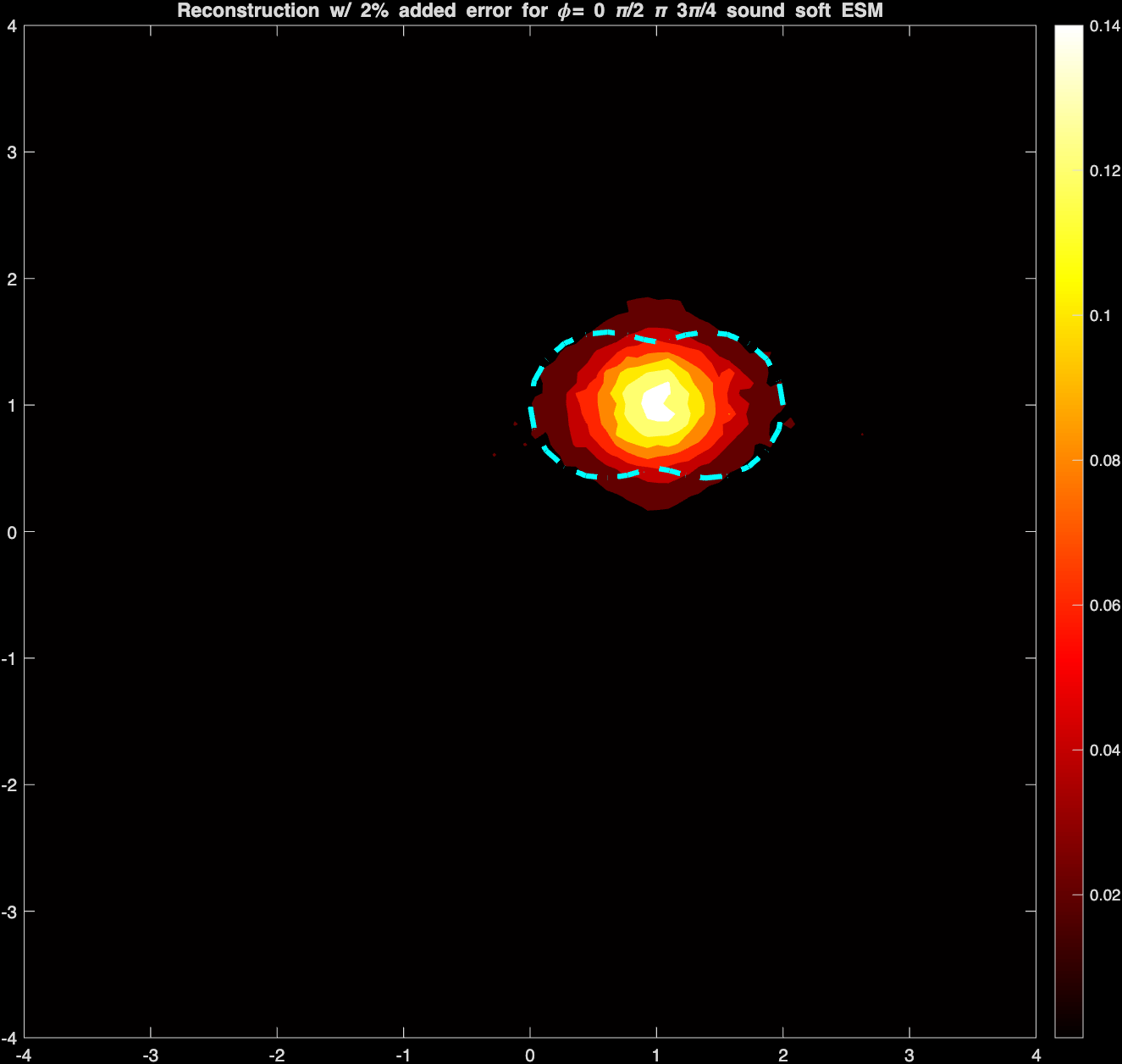}
        \caption{sound--soft reconstruction w/ $\mathbb S_{\text{inc},4}^1$}
    \end{subfigure}%
       ~ 
    \begin{subfigure}[t]{0.4\textwidth}
        \centering
        \includegraphics[scale=0.24]{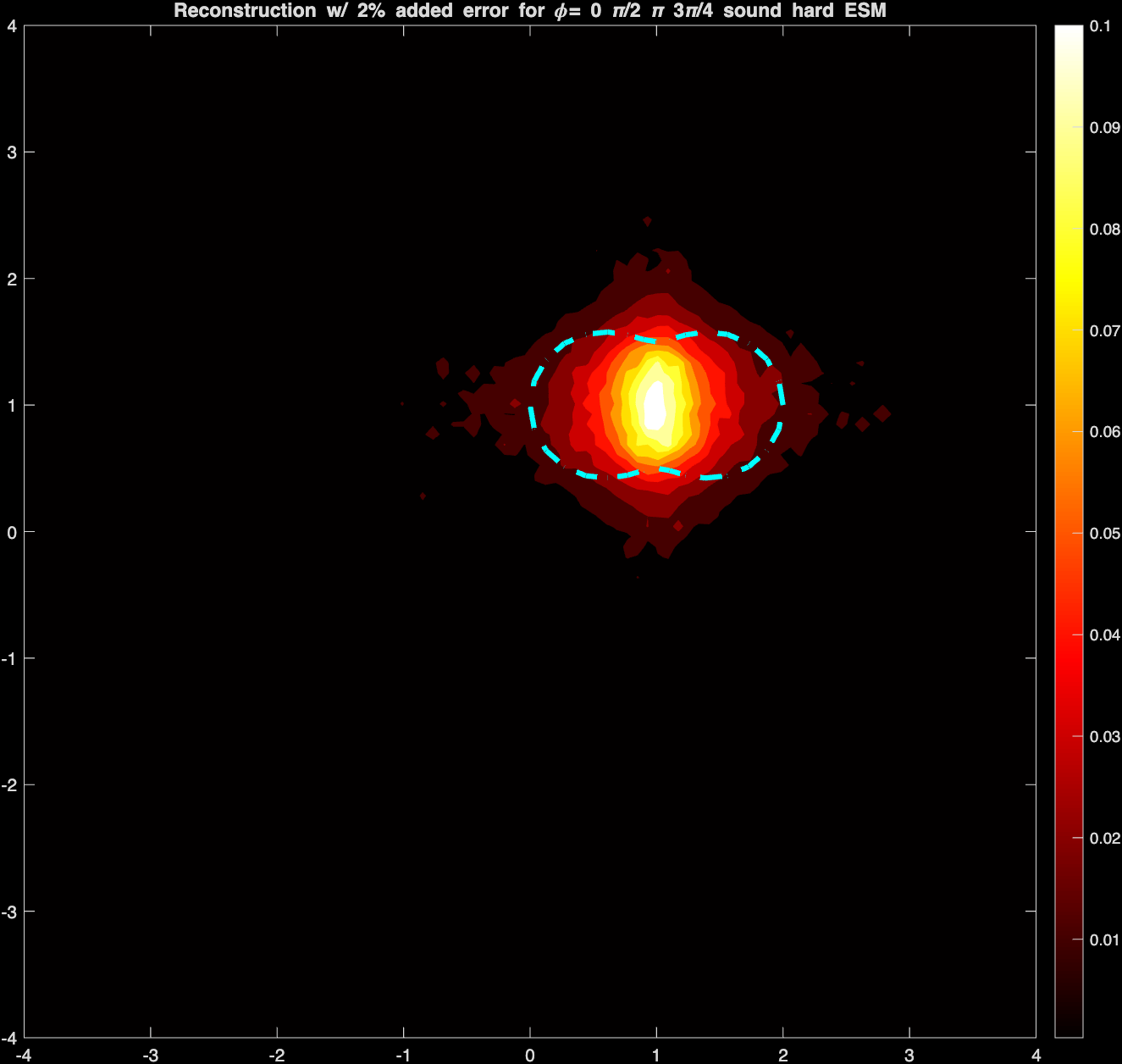}
        \caption{sound--hard reconstruction w/ $\mathbb S_{\text{inc},4}^1$}
    \end{subfigure}%
    \caption{Comparison of the reconstruction given by the sound--soft and sound--hard sampling disks for the peanut--shaped obstacle centered at $(1,1)$ with 2$\%$ added noise factorization--based ESM, with four incident directions.}\label{compare-peanut4inc}
\end{figure}

Now, we will provide another comparison of the two factorization--based ESMs for when one has less data. Here we will take the data given by $\mathbb S_{\text{inc},1}^1$, i.e., only one incident direction. We would expect this to give the worst reconstruction. As we see in Figure \ref{compare-star1inc}, even with this limited amount of data we see that one can recover the location and approximate size of the obstacle.  In this example, we take the wavenumber $\kappa=2$ and $\delta = 0.1$ to test the stability in the presence of significant noise added to the data. As in the previous two examples in this section, we only present the reconstructions given by the optimal radius which is, in both cases, $R=0.7812$. 

\begin{figure}[H]
    \centering
    \begin{subfigure}[t]{0.4\textwidth}
        \centering
        \includegraphics[scale=0.08]{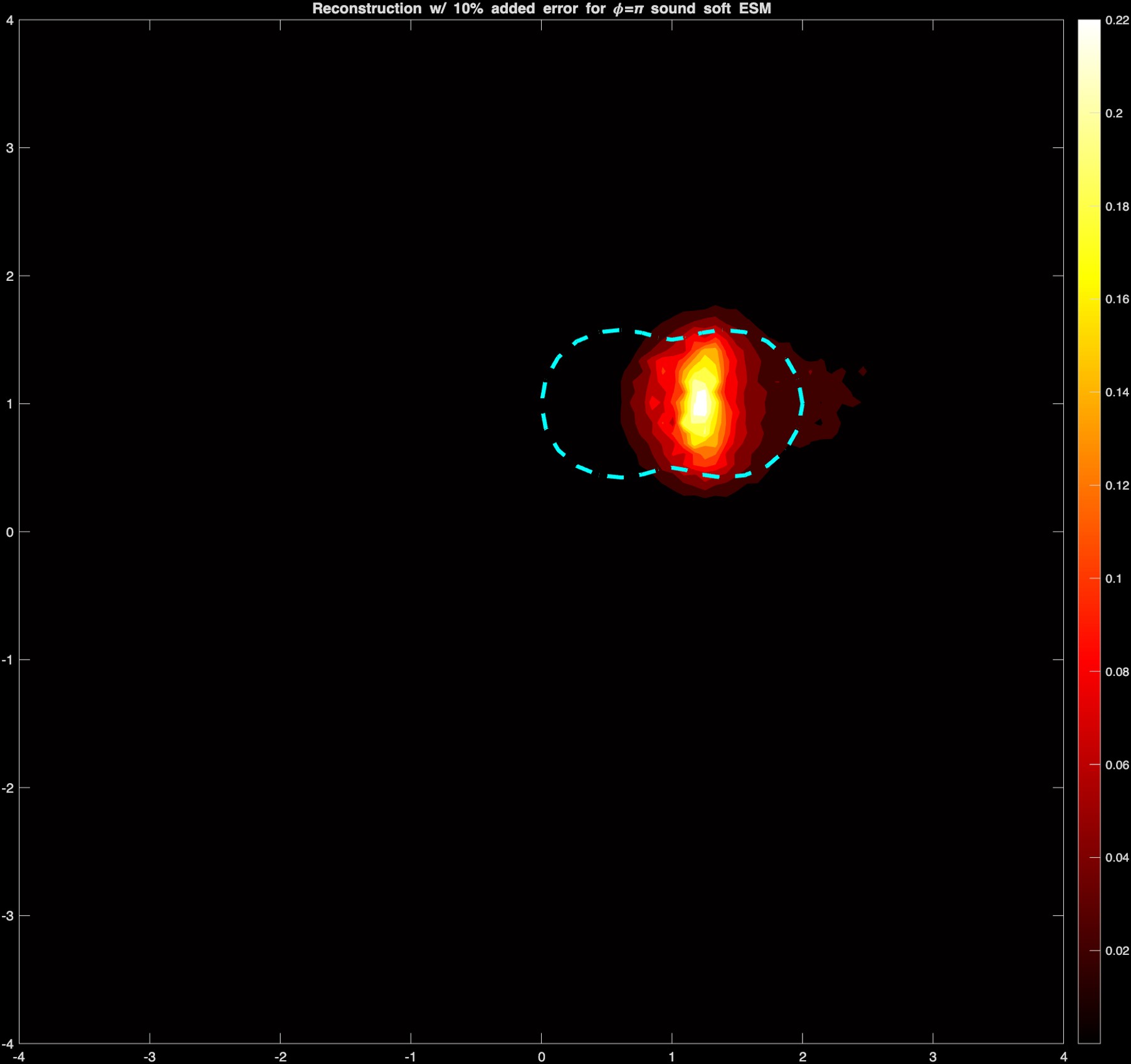}
        \caption{sound--soft reconstruction w/ $\mathbb S_{\text{inc},1}^1$}
    \end{subfigure}%
       ~ 
    \begin{subfigure}[t]{0.4\textwidth}
        \centering
        \includegraphics[scale=0.08]{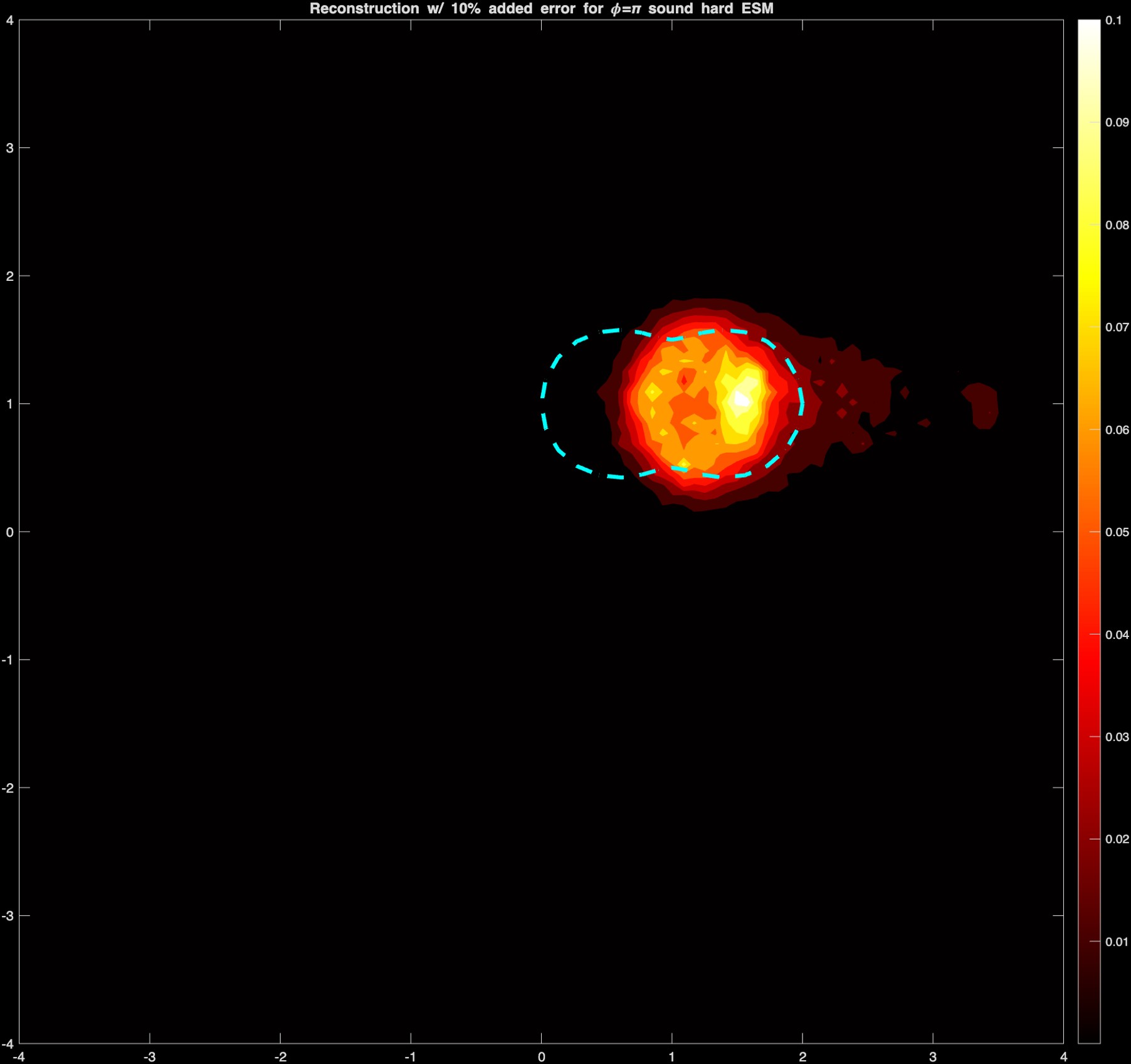}
        \caption{sound--hard reconstruction w/ $\mathbb S_{\text{inc},1}^1$}
    \end{subfigure}%
    \caption{Comparison of the reconstruction given by the sound--soft and sound--hard sampling disks for the peanut--shaped obstacle centered at $(1,1)$ with 10$\%$ added noise factorization--based ESM, with one incident direction.}\label{compare-star1inc}
\end{figure}

We can see that the reconstructions are comparable with the sound--hard sampling disk filling out more of the scatterer in this case in Figure \ref{compare-star1inc}. From our numerical testing, the  sound--hard sampling disk often gives a better reconstruction, especially with data from only one or two incident directions but the difference is minor and determined via visual inspection. We also want to remark that when one uses a sampling radius slightly larger than the optimal radius then the reconstruction does not suffer.

\subsection{Obstacle approximation via the `optimal' sampling disk}

Lastly, we consider finding an explicit formula for an approximating region to recover the scatterer from the support of the imaging functional in \eqref{multi_W_finite_noise}. Here, we will find an `optimal' sampling disk to approximate the clamped obstacle $D \approx B_{R^*}(z^*)$. To this end, we need to determine the optimal radius denoted $R^*$ and center $z^*$. Recall, that by Algorithm 1, we have a way to pick the optimal radius $R^* = R_{p^*}$ corresponds to the value of $p$ yielding the first reconstruction without any artifacts as seen in Figure \ref{findR-peanutSS}--\ref{findR-starSS}. From our numerical testing the optimal radius seems to satisfy that $R^* \approx 0.5\cdot \mathrm{diam}(D)$.

In order to pick the optimal center, we notice that in the above reconstructions, the imaging functional $W_{\mathrm{Recon}}(z)$ given in equation \eqref{multi_W_finite_noise} seems to always obtain its maximal value contained inside the scatterer. In particular, we see from Figures \ref{findR-starSH}--\ref{compare-peanut4inc} that the imaging functional is maximal at/near the geometric center of the object, especially in the case of data given by multiple incident directions. With this in mind, we will define the optimal center $z^*$ by the quantity 
$$ z^* = \operatorname*{arg\,max}_{z \in \mathcal{M}} W_{\mathrm{Recon}}(z) $$
which can be determined visually by using a data tip in the contour plot of the imaging functional. Note that this `optimal' sampling disk has a theoretical meaning and is not only justified by numerical experiments. Indeed, the imaging functional 
$$ W_{\mathrm{Recon}}(z) = 1/ \| \mathbf{g}^\alpha_z \|_2^2 \quad \text{is the regularized solution to} \quad \left(\mathbf{F}^{\sigma*}_z \mathbf{F}^\sigma_z \right)^{1/4} \mathbf{g}_z = u^\infty(\cdot \, ,d)$$
for the case of a single incident direction. Therefore, we see that the `optimal' sampling disk is the disk such that the above linear system has minimal norm. So this can be seen as the disk for which the measured scattering data matches the sound--soft or sound--hard disk with minimal energy. This is due to the fact that when $D \subseteq B_{R}(z),$ the above linear system is guaranteed to have a solution, implying that $\| \mathbf{g}^\alpha_z \|_2$ will be bounded. However, when $D \cap B_{R}(z) = \emptyset,$ there is no solution to the linear system, so $\| \mathbf{g}^\alpha_z \|_2$ will be unbounded.

Now, we provide approximations of the star and peanut--shaped obstacles using the `optimal' sampling disk. To this end, the `optimal' sampling disk is determined by the above discussion. In Figure \ref{reconw/ball-peanut1inc}, we provide the reconstruction of the peanut--shaped obstacle by the contour plot of the imaging functional using the sound--soft factorization--based ESM and the `optimal' sampling disk. In Figure \ref{reconw/ball-star2inc}, we provide the reconstruction of the star--shaped obstacle by the contour plot of the imaging functional using the sound--hard factorization--based ESM and the `optimal' sampling disk. For both examples we let $\delta=0.02$ and the wavenumber $\kappa=2$.

\begin{figure}[H]
    \centering
    \begin{subfigure}[t]{0.4\textwidth}
        \centering
        \includegraphics[scale=0.44]{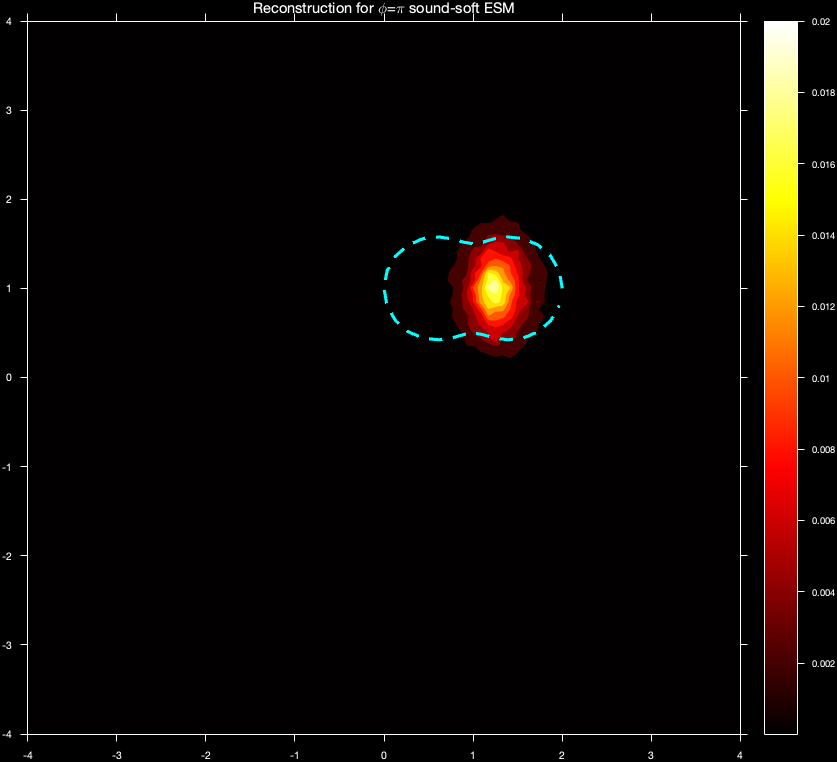}
        \caption{contour plot reconstruction}
    \end{subfigure}%
       ~ 
    \begin{subfigure}[t]{0.4\textwidth}
        \centering
        \includegraphics[scale=0.4]{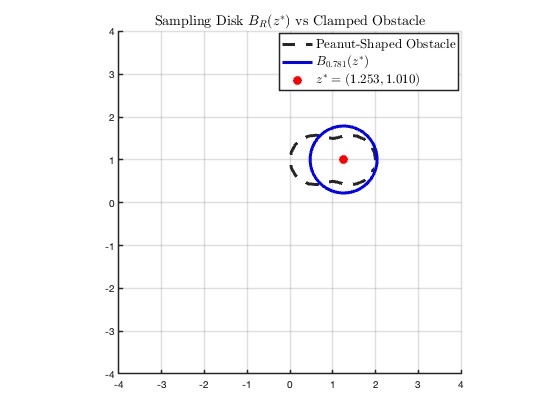}
        \caption{reconstruction via `optimal' sampling disk}
    \end{subfigure}%
    \caption{Reconstruction of a peanut--shaped obstacle with 2$\%$ added noise using the sound--soft (Dirichlet) factorization--based ESM and the `optimal' sampling disk, with one incident direction. The optimal radius is $R^* = 0.781$ and center $z^* = (1.253 , 1.010)$. }\label{reconw/ball-peanut1inc}
\end{figure}

\begin{figure}[H]
    \centering
    \begin{subfigure}[t]{0.4\textwidth}
        \centering
        \includegraphics[scale=0.44]{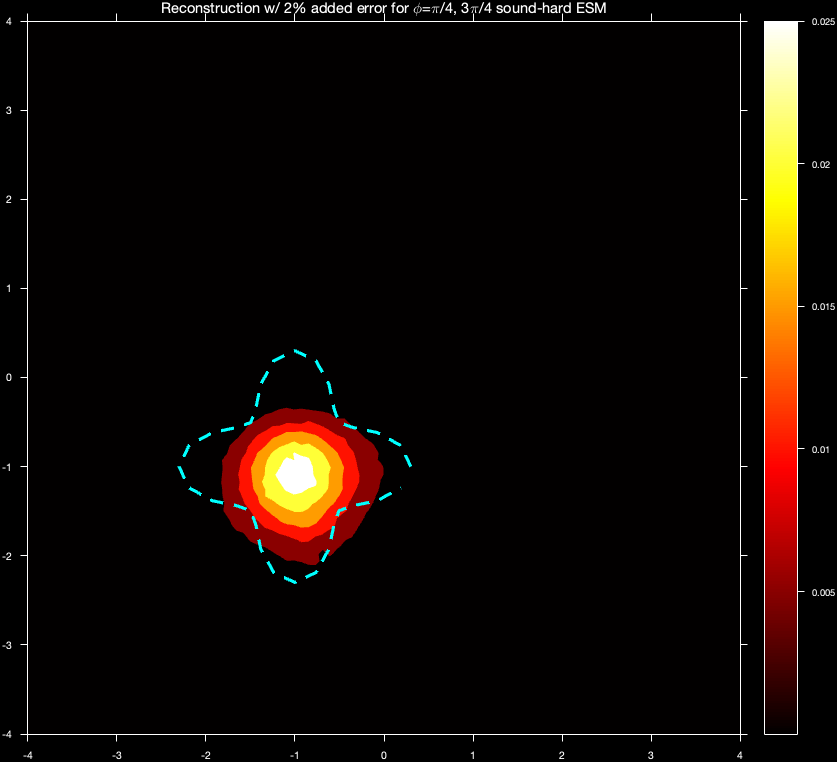}
        \caption{contour plot reconstruction}
    \end{subfigure}%
       ~ 
    \begin{subfigure}[t]{0.4\textwidth}
        \centering
        \includegraphics[scale=0.4]{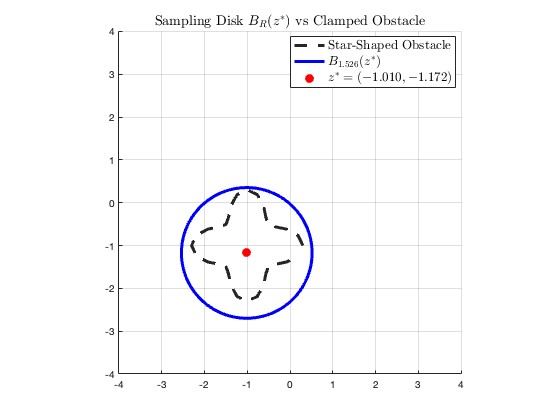}
        \caption{reconstruction via `optimal' sampling disk}
    \end{subfigure}%
    \caption{Reconstruction of a star--shaped obstacle with 2$\%$ added noise using the sound--hard (Neumann) factorization--based ESM and the `optimal' sampling disk, with two incident directions. The optimal radius is $R^* = 1.526$ and center $z^* = (-1.010 , -1.172)$. }\label{reconw/ball-star2inc}
\end{figure}

With this, we see that the `optimal' sampling disk gives a good approximation of the location and size of the obstacle. In particular, the combined encoding of location and size provides a useful initialization for more refined reconstruction techniques, including iterative schemes and optimization--based methods. Additionally, the disk--based approximation could serve as a prior or proposal mechanism in stochastic frameworks, such as Monte Carlo or Bayesian inversion approaches, where coarse geometric information can accelerate convergence and improve sampling efficiency as in \cite{LiDengSun2020,LiSunXu2020,HuangLiXu2026}.

\section{Conclusion}\label{end-FMbasedESM}
In this work we developed a novel implementation of the extended sampling method (ESM) for the inverse biharmonic (flexural) wave scattering problem by a clamped obstacle in two dimensions. The proposed reconstruction strategy is derived from the analytical framework of the factorization method, leading to computationally simple imaging functionals. Our central motivation is to exploit the ability of the ESM to localize an unknown scatterer using far--field data generated by only one (or a few) incident plane waves, but to derive an implementation based on the analytical framework of the factorization method similar to the work in \cite{MaHu2022}. 

We introduced two closely related ESM indicators based on using sound--soft (Dirichlet) and/or sound--hard (Neumann) sampling disks. The sound--soft formulation is the standard choice used in most of the ESM studies in the literature, while the sound--hard variant provides an additional mechanism for reconstructing the obstacle. Our numerical experiments demonstrate that both indicators can produce stable reconstructions in the presence of noise, and that the use of multiple incident directions improves robustness and geometric fidelity. Beyond just finding the location, we also developed a method for recovering an ``optimal'' sampling disk from the indicator data. The resulting disk approximation provides a reconstruction of the obstacle that encodes the region's approximate location and size.

Several directions for future work are of interest. First, it would be valuable to extend the factorization--based ESM developed here to other plate boundary conditions and more general physical parameters. Second, the current disk--based sampling operators introduce restrictions on the wavenumber (excluding Dirichlet/Neumann eigenvalues of the sampling disk). Therefore, it would be advantageous to investigate a sampling disk with impedance boundary conditions or penetrable sampling inclusions in order to mitigate or remove these constraints. Finally, studying limited--aperture configurations, three--dimensional analogues, and extensions to other wave models (e.g., elastic or electromagnetic settings) would further clarify the scope and practical utility of the factorization--based ESM methodology. \\

\noindent{\bf Acknowledgments:} The research of authors I. Harris and G. Ozochiawaeze is partially supported by the NSF DMS Grants 2509722 and 2208256.


\end{document}